\documentclass[11pt]{article}

\usepackage{amsmath,amssymb,amsthm}
\usepackage{enumitem}
\usepackage[normalem]{ulem}
\usepackage[a4paper,margin=1in]{geometry}
\usepackage{xcolor}
\usepackage{subcaption}
\usepackage{graphicx}
\usepackage{authblk}

\theoremstyle{plain} 
\newtheorem{theorem}{Theorem}
\newtheorem{proposition}{Proposition}
\newtheorem{ob}{Observation}
\newtheorem{corollary}{Corollary}
\newtheorem{lemma}{Lemma}

\theoremstyle{definition}
\newtheorem{example}{Example}

\newcommand{\abs}[1]{\left|#1\right|}
\newcommand{\floor}[1]{\left\lfloor #1\right\rfloor}
\newcommand{\ceil}[1]{\left\lceil #1\right\rceil}

\usepackage{tikz}
\usepackage{pgfplots}
\pgfplotsset{compat=1.18}
\usepgfplotslibrary{fillbetween}
\usetikzlibrary{backgrounds}

\tikzset{
  tree edge/.style={
    draw=black,
    line width=0.6pt
  },
  vertex/.style={
    circle,
    fill=black,
    minimum size=4.5pt,
    inner sep=0pt
  },
  vertex label/.style={
    font=\scriptsize,
    fill=white,
    inner sep=0.8pt
  },
  resistant edge/.style={
    draw=black,
    line width=0.9pt,
    line cap=round,
    line join=round
  },
  proper resistant edge/.style={
    resistant edge,
    densely dotted
  },
  full resistant edge/.style={
    resistant edge,
    solid
  }
}

\newcommand{\rotatedresistantedge}[1]{%
  \begin{scope}[rotate=#1]
    \path[
      proper resistant edge,
      rounded corners=7pt
    ]
      (90:2.75) -- (210:2.75) -- (330:0.82) -- cycle;
  \end{scope}%
}

\tikzset{
  extremal tree/.style={
    x=1cm,
    y=1cm
  },
  tree vertex/.style={
    circle,
    draw=black,
    fill=white,
    line width=0.6pt,
    minimum size=7pt,
    inner sep=0pt
  },
  tree label/.style={
    font=\small,
    fill=white,
    inner sep=0.8pt
  },
  leaf label/.style={
    font=\small,
    align=center
  },
  construction edge/.style={
    draw=black,
    line width=0.6pt
  }
}

\theoremstyle{definition}
\newtheorem{definition}{Definition}

\newtheorem{remark}{Remark}
\setlist[enumerate,1]{label=(\arabic*)}

\title{Minimum Contagious Sets in Trees with Degree-Proportional Thresholds}
\author[1]{Jeong-Ok Choi\thanks{Corresponding author.
E-mail addresses: \texttt{jchoi351@gist.ac.kr} (J.-O. Choi),
\texttt{cmcdev@uchicago.edu} (C. McDevitt).}}
\author[2]{Collin McDevitt}

\affil[1]{Department of Mathematical Sciences,
Gwangju Institute of Science and Technology,
Gwangju 61005, South Korea}

\affil[2]{Data Science Institute,
University of Chicago,
Chicago, IL 60637, United States}

\date{}

\date{}

\begin{document}
\maketitle

\begin{abstract}
  We investigate how large the minimum size of a contagious set can be among trees of a fixed order. 
  Given a graph $G$ and $\rho\in[0,1]$, an inactive vertex $u$ becomes  active when at least $\ceil{\rho d_G(u)}$ of its neighbors are active. An initial set of active vertices is contagious if the process eventually activates every vertex. We obtain exact values for the minimum size of a contagious set in perfect $d$-ary trees. We then determine the exact maximum of this minimum size over all trees of order $n$. We establish this result by bounding the number of pairwise disjoint resistant sets, each of which must intersect every contagious set, and constructing trees that attain the resulting bounds. Finally, by bounding the change in the minimum size of a contagious set under a leaf exchange, we show that, for $0<\rho\leq1$ and $n\geq2$, every integer between $1$ and this maximum occurs as the minimum size of a contagious set in a tree of order $n$.
\end{abstract}

Keywords: Irreversible dynamic monopoly, Degree-proportional threshold function, Extremal problems on trees

\medskip
\section{Introduction}
Threshold-based diffusion processes on graphs provide a natural framework for modeling the adoption and spread of behaviors, technologies, products, opinions, and information. In this paper, we consider such processes on finite simple graphs. Starting from a set of initially active vertices, an inactive vertex becomes active once sufficiently many of its neighbors are active, and remains active thereafter. A set of initially active vertices of a graph is called a \emph{contagious set} if every vertex eventually becomes active through this process. 

This basic threshold process has been studied under several names. In particular, contagious sets are also referred to as \emph{irreversible dynamic monopolies} or \emph{perfect target sets} in the literature. Many variants of the process have also been considered under different threshold functions. Examples include constant \cite{reichman2012}, general vertex-dependent \cite{zaker2012}, and degree-proportional threshold functions \cite{gentner2017}. 

More specifically, for a graph $G$, we focus on the degree-proportional threshold function $\tau_\rho: V(G) \longrightarrow \mathbb{N}_0$ defined by  \[
\tau_\rho(v)=\lceil \rho d_G(v)\rceil,
\qquad \rho\in[0,1].
\]
A contagious set with respect to $\tau_\rho$ will be called a
$\rho$-contagious set, and we write $h_\rho(G)$ for the minimum
size of a $\rho$-contagious set in $G$. This degree-proportional
threshold also arises naturally in game-theoretic models of contagion
based on coordination games; see, for example,
\cite{morris2000,yu2019}.

A fundamental question in the study of such processes is to determine or bound the minimum size of a contagious set. This question has been studied under various threshold models \cite{peleg1998,dreyer2009,chen2009,ackerman2010,
reichman2012,zaker2012} and under degree-proportional threshold functions \cite{chang2011,garbe2018}. In particular, Gentner and Rautenbach \cite{gentner2017} proved that for $\rho\in(0,1]$, every tree $T$ of order at least $1/\rho$ satisfies
\[
h_\rho(T)\le \rho |V(T)|.
\]

Since $h_\rho(T)$ depends on the structure of $T$, its value is not determined by the order of the tree and $\rho$ alone. This motivates the study of exact values for particular families of trees, such as perfect $d$-ary trees, as well as the range of possible values among all trees of a given order.

In this paper, we study $h_\rho$ on trees.
We first obtain an explicit formula for $h_\rho(T_{d,k})$ valid
for every $\rho\in[0,1]$, where $T_{d,k}$ denotes
the perfect $d$-ary tree with $k$ levels. 

We then consider the extremal problem of determining the maximum possible value of $h_\rho(T)$ among all $n$-vertex trees $T$. For $n\ge 2$, define
\[
M_\rho(n)
=
\max\{h_\rho(T): T \text{ is a tree on } n
\text{ vertices}\}.
\]
We determine $M_\rho(n)$ exactly for every
$\rho\in(0,1]$ and $n\ge 2$.

We also investigate how $h_\rho$ behaves under edge additions. We prove that for any graph adding a single edge changes $h_\rho$ by at most one. Moreover, we show that for every tree of order at least three, there exists an edge whose addition does not decrease $h_\rho$. 

Finally, using the behavior of $h_\rho$ under leaf exchanges, we determine the complete spectrum of $h_\rho$ on $n$-vertex trees. More precisely, we show that
for every $\rho\in(0,1]$ and $n\ge 2$,
\[
\{h_\rho(T): T \text{ is a tree on } n
\text{ vertices}\}
=
\{1,2,\ldots,M_\rho(n)\}.
\]

The paper is organized as follows.
In Section~2, we introduce the activation process
and establish some basic properties of contagious sets.
In Section~3, we determine $h_\rho$ for perfect
$d$-ary trees and examine its connection with
minimum vertex covers.
In Section~4, we characterize contagious sets
in terms of resistant sets and use a packing
argument for resistant subtrees to determine
the extremal function $M_\rho(n)$.
 Section~5 investigates the effect of edge additions on contagious sets and $h_\rho$, both for general graphs and for trees.
Finally, in Section~6, we use leaf exchanges to determine the complete spectrum of $h_\rho$ on $n$-vertex trees. 

\bigskip
\section{Preliminaries} 
Throughout this paper, all graphs are finite and simple. 
For a graph $G$, let $V(G)$ and $E(G)$ denote its vertex and edge sets, respectively. 
We write $ \abs{G}$ for the order of $G$. We use $N_G(v)$ and
$d_G(v)$ to denote the neighborhood and degree, respectively, of a vertex $v$
in $G$. When the graph $G$ is clear from context, we may use the notation $d(v)$ and $N(v)$ for the degree and neighborhood of a vertex $v$ in $G$.

Let $\tau \colon V(G) \to \mathbb{R}_{\geq 0}$ be a threshold function. We allow threshold functions to take the value zero. Let
$S \subseteq V(G)$ be a possibly empty set of initially \textit{active}
vertices. Define the activation process recursively by $S_0^G = S$ and
\[
S_{t+1}^G
=
S_t^G \cup
\bigl\{
v \in V(G) :
|N_G(v) \cap S_t^G| \geq \tau(v)
\bigr\},
\qquad t \geq 0.
\]

Note that $S_t^G\subseteq S_{t+1}^G$ for every $t\ge0$;
that is, once a vertex becomes active, it remains active.
When the underlying graph is clear from the context, we write
$S_t$ instead of $S_t^G$.

If there exists some $t \geq 0$ such that $S_t^G = V(G)$, then $S$ is called a
$\tau$-\textit{contagious set} of $G$
\cite{garbe2018}. A natural question is to determine the minimum size of a $\tau$-contagious set.
Accordingly, define
\[
h_{\tau}(G)
=
\min
\bigl\{
|S| :
S \text{ is a } \tau\text{-contagious set of } G
\bigr\}.
\]

We study a specific threshold function called the
\textit{degree-proportional} threshold function
\cite{garbe2018}.
For $\rho\in[0,1]$, define $\tau_\rho\colon V(G)\to\mathbb{N}_0$ by
$\tau_\rho(v)=\ceil{\rho d_G(v)}$ for all $v\in V(G)$. When the threshold
function under consideration is the degree-proportional function $\tau_\rho$,
we write $\rho$ in place of $\tau_\rho$ and use the term $\rho$-contagious set
instead of $\tau_\rho$-contagious set. When the value of $\rho$ is clear from the context we use the term \textit{contagious set} instead of $\rho$-contagious set. 

Define
\[
M_\rho(n)=\max\{h_\rho(T):T\text{ is a tree of order }n\}.
\]

It was shown in
\cite[Theorem~2]{gentner2017} that
\[
M_\rho(n)\leq\rho n
\]
for all $\rho\in(0,1]$ and $n\geq1/\rho$. In our work, we find the exact value of $M_{\rho}(n)$ across $\rho\in[0,1]$ and $n\geq 1$.

\begin{remark}
We briefly recall the coordination-game interpretation of the degree-proportional threshold; see, for example, \cite{morris2000, yu2019}. Consider a two-strategy coordination game with strategies $A$ and $B$.
If a vertex chooses $A$, then it receives payoff $1-\rho$ from each
neighbor choosing $A$ and payoff $0$ from each neighbor choosing $B$.
Similarly, if the vertex chooses $B$, then it receives payoff $0$ from
each neighbor choosing $A$ and payoff $\rho$ from each neighbor choosing
$B$. The total payoff of a vertex is the sum of the payoffs obtained from its neighbors.

Suppose that ties are resolved in favor of $A$.
If a vertex $u$ has $k$ neighbors choosing $A$ and $m$ neighbors
choosing $B$, then $u$ chooses $A$ if and only if
\[
(1-\rho)k \ge \rho m.
\]
Since $k+m=d_G(u)$, this is equivalent to
\[
k\ge \rho d_G(u),
\]
or, equivalently,
\[
k\ge \lceil \rho d_G(u)\rceil=\tau_\rho(u).
\]
Thus, if vertices choosing $A$ are regarded as active, then an inactive vertex $u$ becomes active exactly when the number $k$ of its active neighbors satisfies $k \ge \tau_\rho(u)$. 
\end{remark}

\medskip
We conclude this section with a basic observation about connected components. 

\begin{ob}\label{ob:lower}
Let $G$ be a finite graph, and let $G_1,\ldots,G_k$ be the
nontrivial connected components of $G$. If $\rho\in(0,1]$, then
\[
h_\rho(G)=\sum_{i=1}^k h_\rho(G_i).
\]
In particular,
\[
h_\rho(G)\ge k.
\]
\end{ob}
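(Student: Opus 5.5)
The plan is to show that the activation process on $G$ splits into independent processes on the components. Two points carry the argument. The first is locality: the process never passes information between components. The second is that the trivial components, the isolated vertices, need nothing from the initial set.

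First I would record that for every vertex $v$ lying in a component $G_i$ we have $N_G(v)=N_{G_i}(v)$ and $d_G(v)=d_{G_i}(v)$. Hence $\tau_\rho$ computed in $G$ agrees with $\tau_\rho$ computed in $G_i$. Then, for any $S\subseteq V(G)$, I would prove by induction on $t$ that
\[
S_t^G\cap V(G_i)=(S\cap V(G_i))_t^{G_i}
\]
for every component $G_i$, trivial ones included. The base case $t=0$ is immediate. For the inductive step, whether $v\in V(G_i)$ enters $S_{t+1}^G$ depends only on $|N_G(v)\cap S_t^G|=|N_{G_i}(v)\cap (S_t^G\cap V(G_i))|$ and on $\tau_\rho(v)$, and both of these agree with the corresponding quantities in $G_i$. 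It follows that $S$ is $\rho$-contagious in $G$ if and only if each $S\cap V(G_i)$ is $\rho$-contagious in its component.

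Next, the trivial components. An isolated vertex $v$ has $\tau_\rho(v)=\lceil 0\rceil=0$, so it enters $S_1$ no matter what, and the empty set is contagious on it. Consequently isolated vertices contribute $0$. For the nontrivial components, the inequality $\le$ in the sum formula comes from taking the union of minimum contagious sets of the $G_i$. The inequality $\ge$ comes from the fact that any contagious $S$ restricts to contagious sets $S\cap V(G_i)$, which are pairwise disjoint. This gives $h_\rho(G)=\sum_{i=1}^k h_\rho(G_i)$.

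For the ``in particular'' part, it remains to show $h_\rho(G_i)\ge 1$ for each nontrivial $G_i$. Here $\rho>0$ is used. If $S=\emptyset$, then no vertex of $G_i$ is ever activated, because every vertex $v$ has $d(v)\ge 1$ and so $\tau_\rho(v)=\lceil \rho d(v)\rceil\ge 1$. An induction shows $S_t=\emptyset$ for all $t$, so the empty set is not contagious. There is no real obstacle in any of this; the only points needing care are the threshold-zero behaviour of isolated vertices and the role of $\rho>0$ in the lower bound.
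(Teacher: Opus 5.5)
Your proposal is correct and follows the same route as the paper: locality via $N_G(v)=N_{G_i}(v)$ gives the componentwise characterization of contagious sets, and isolated vertices have threshold $\tau_\rho(v)=0$. You go further by spelling out the induction on $t$ and the argument that $h_\rho(G_i)\ge 1$ for every nontrivial $G_i$ when $\rho>0$, both of which the paper leaves implicit.
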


Indeed, the activation processes on distinct components are independent, since
\[ N_G(v) = N_{G_i}(v) \] for every $v \in V(G_i)$. 
Thus, the activation of a vertex in $G_i$ depends only on the vertices of $G_i$. 
Hence, a set $S\subseteq V(G)$ is $\rho$-contagious
if and only if $S\cap V(G_i)$ is $\rho$-contagious in $G_i$ for every
$i=1,\ldots,k$.

Isolated vertices make no contribution to
$h_\rho(G)$, since for an isolated vertex $v$,
\[
\tau_\rho(v)=\left\lceil \rho d_G(v)\right\rceil=0.
\]
Hence, an isolated vertex need not be initially active.

\section{Exact values of $h_\rho$ for perfect $d$-ary trees }

Note first that if $S$ is a vertex cover of $G$, then $|N(x) \cap S| = |N(x)| = d(x) \ge \lceil \rho d(x) \rceil$ for every $x \notin S$. So, $S_1^G = V(G)$, and hence $S$ is a $\rho$-contagious set. The converse also holds when the maximum degree of $G$ is bounded
relative to $\rho$, as shown in the following proposition.  

\begin{proposition}\label{prop:vcover}
Let $G$ be a graph with maximum degree at most $d+1$. If $\rho \in (\frac{d}{d+1}, 1]$, then every $\rho$-contagious set is a vertex cover of $G$. Hence, $h_{\rho}(G) = \beta(G)$, where $\beta(G)$ is the size of a minimum vertex cover of $G$. 
\end{proposition}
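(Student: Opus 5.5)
The key observation is that under the hypothesis every vertex has threshold equal to its full degree. Fix a vertex $v$ with $d(v)\ge 1$, so $1\le d(v)\le d+1$. Since $\rho>\frac{d}{d+1}$, we get
\[
\rho d(v) > \frac{d}{d+1}\, d(v) \ge d(v) - \frac{d(v)}{d+1} \ge d(v)-1 .
\]
Here the last inequality uses $d(v)\le d+1$. Together with $\rho\le 1$, this gives $d(v)-1<\rho d(v)\le d(v)$, hence $\tau_\rho(v)=\lceil\rho d(v)\rceil=d(v)$. Isolated vertices have threshold $0$ and play no role in vertex covers.

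With this in hand, I would argue by contradiction. Suppose $S$ is $\rho$-contagious but not a vertex cover. Then some edge $uv$ has $u,v\notin S$. Neither $u$ nor $v$ can ever become active first. Indeed, for $u$ to become active at step $t+1$, all $d(u)$ neighbors of $u$ must lie in $S_t$, including $v$; symmetrically, $v$ needs $u\in S_t$.

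Formally, let $t$ be the least index with $\{u,v\}\cap S_t\neq\emptyset$. Such a $t$ exists because $S$ is contagious, and $t\ge1$ since $u,v\notin S_0$. Say $u\in S_t$. Then $u\notin S_{t-1}$, so $u$ became active at step $t$, which forces $|N(u)\cap S_{t-1}|\ge d(u)$. This means $N(u)\subseteq S_{t-1}$, and in particular $v\in S_{t-1}$, contradicting the minimality of $t$. Hence every $\rho$-contagious set is a vertex cover.

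For the equality $h_\rho(G)=\beta(G)$, two inequalities are needed. The observation preceding the proposition shows every vertex cover is $\rho$-contagious, so $h_\rho(G)\le\beta(G)$. The argument above gives $h_\rho(G)\ge\beta(G)$.

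The only delicate step is the inequality $\lceil\rho d(v)\rceil=d(v)$, which needs the strict bound $\rho>\frac d{d+1}$ together with $d(v)\le d+1$. I expect no real obstacle beyond checking this boundary carefully.
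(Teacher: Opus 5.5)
Your proof is correct and takes essentially the same approach as the paper. Both arguments take the first-activated endpoint of an uncovered edge and use $\rho d(v) > d(v)-1$, which you repackage as $\tau_\rho(v)=d(v)$, to show that the other endpoint must already be active, a contradiction; the equality $h_\rho(G)=\beta(G)$ then follows, as in the paper, from the fact that every vertex cover is $\rho$-contagious.
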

\begin{proof}
For $\rho \in (\frac{d}{d+1}, 1]$ let $S$ be a $\rho$-contagious set. If $S$ is not a vertex cover, then there exist adjacent vertices $x$ and $y$ such that $S \cap \{ x, y \} = \emptyset$. Without loss of generality, we may assume that $x$ is activated no later than $y$. Suppose that $t_0$ is the step at which $x$ is activated. Then \[ |N(x)\cap S_{t_0-1}|\ge \lceil \rho d(x)\rceil. \]
Since $y \in N_G(x)$ and $y\notin S_{t_0-1}$, we have \[ |N(x)\cap S_{t_0-1}|\le d(x)-1. \] On the other hand, since $d(x)\le d+1$,
\[
 \frac{d(x)-1}{d(x)} = 1-\frac{1}{d(x)} \le 1-\frac{1}{d+1} = \frac{d}{d+1}<\rho.
\]
Hence,
\[
\rho d(x)>d(x)-1.
\] Therefore, \[ d(x)-1 \ge |N(x)\cap S_{t_0-1}| \ge \lceil \rho d(x)\rceil > d(x)-1, \] a contradiction. Thus, $S$ is a vertex cover of $G$. The equality $h_\rho(G)=\beta(G)$ follows from the observation preceding the proposition.
\end{proof} 

Let $T_{d, k}$ be a perfect $d$-ary tree with $k$ levels,
in which the root is at level $1$, every non-leaf vertex has
exactly $d$ children, and all leaves are at level $k$.

\begin{proposition}\label{prop:beta}
For $d,k \ge 2$, 
\[
\beta(T_{d, k}) = 
\begin{cases}
\dfrac{d^k-1}{d^2-1}, & k \text{ even},\\[6pt]
\dfrac{d(d^{k-1}-1)}{d^2-1}, & k \text{ odd}.
\end{cases}
\]

\end{proposition}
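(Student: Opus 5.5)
The plan is to exhibit an explicit vertex cover of the stated size and then match it with a matching of the same size. By K\H{o}nig's theorem, or simply because every vertex cover meets each edge of a matching in a distinct vertex, this gives the exact value of $\beta(T_{d,k})$.

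Level $i$ contains $d^{i-1}$ vertices. For the upper bound, take $C$ to be the union of the levels whose index has the same parity as $k-1$, namely levels $k-1, k-3, \dots$. Every edge joins level $i$ to level $i+1$, so it has exactly one endpoint in a level of each parity, and $C$ is a vertex cover.

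When $k$ is even, $C$ consists of levels $1,3,\dots,k-1$, and
\[
|C|=\sum_{j=0}^{k/2-1} d^{2j}=\frac{d^k-1}{d^2-1}.
\]
When $k$ is odd, $C$ consists of levels $2,4,\dots,k-1$, and
\[
|C|=\sum_{j=0}^{(k-3)/2} d^{2j+1}=\frac{d(d^{k-1}-1)}{d^2-1}.
\]
These agree with the two formulas in the statement.

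For the lower bound, I construct a matching $M$ with $|M|=|C|$. To each vertex $v\in C$ I assign the edge from $v$ to one of its children. Every vertex of $C$ lies at level at most $k-1$, so it has $d\ge 1$ children and the assignment is defined. Two such edges cannot share an endpoint. Distinct vertices of $C$ have distinct children, so two edges could only meet if the child chosen for some $v\in C$ were itself in $C$. That child sits at a level of the opposite parity to $v$'s level, so it is not in $C$. The edges therefore form a matching of size $|C|$, and any vertex cover needs at least $|M|$ vertices. This gives $\beta(T_{d,k})=|C|$.

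The only real subtlety is checking that the parity choice makes the child-assignment disjoint, and this is immediate from the alternating-level structure. An alternative would be the standard leaf-parent greedy argument showing that parents of leaves can be taken in a minimum cover, followed by induction on $k$: removing the bottom two levels turns $T_{d,k}$ into $T_{d,k-2}$ and adds $d^{k-2}$ to the count. I expect the matching argument to be cleaner, so I would use it.
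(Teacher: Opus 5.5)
Your proposal is correct and takes the same approach as the paper. Both take the alternating levels $k-1, k-3, \dots$ as the cover and certify its minimality with the matching that sends each cover vertex to one of its children, using $|Q|\ge|M|$. You also verify explicitly the cover and matching properties that the paper leaves implicit.
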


\begin{proof}
For $1 \le i \le k$, let $L_i$ denote the set of vertices at level $i$.

For a graph $G$, if there is a matching $M_0$ and a vertex cover $Q_0$ such that $|M_0| = |Q_0|$, then $Q_0$ is a minimum vertex cover of $G$, since $|Q| \ge |M|$ for any matching $M$ and any vertex cover $Q$.

If $k$ is even, then we consider
\[
Q = L_1 \cup L_3 \cup \cdots \cup L_{2t+1} \cup \cdots \cup L_{k-1}.
\]
Also, for each $v \in Q$, choose a child of $v$ and let $e_v$ be the edge connecting them. Then
\[
M = \{e_v \colon v \in Q\}
\]
is a matching. Note that
\[
|M| = |Q| = 1+d^2+\cdots+d^{2i}+\cdots+d^{k-2}
= \dfrac{d^k-1}{d^2-1}.
\]
Hence, $Q$ is a minimum vertex cover.
\smallskip
Similarly, if $k$ is odd, we consider
\[
Q = L_2 \cup L_4 \cup \cdots \cup L_{2t} \cup \cdots \cup L_{k-1}.
\]
For each $v \in Q$, choose a child of $v$ and let $e_v$ be the edge connecting them. Then
\[
M = \{e_v \colon v \in Q\}
\]
is a matching. Note that
\[
|M| = |Q| = d+d^3+\cdots+d^{2i-1}+\cdots+d^{k-2}
= \dfrac{d(d^{k-1}-1)}{d^2-1}.
\]
Hence, $Q$ is a minimum vertex cover.

\end{proof}
\medskip

\begin{proposition}\label{prop:leafless}
Let $\rho\in[0,1]$, and let $G$ be a graph with no component isomorphic to $K_2$. Then there exists a minimum $\rho$-contagious set of $G$ containing no leaves. 
\end{proposition}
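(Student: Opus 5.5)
Take a minimum $\rho$-contagious set $S$ of $G$ that contains as few leaves as possible among all minimum contagious sets; we will show it contains no leaf. Suppose a leaf $\ell\in S$ with unique neighbor $u$. Because no component is $K_2$, $u$ is not a leaf, so $d(u)\ge 2$. (If $u$ had degree $0$ it could not be adjacent to $\ell$.) Two cases arise. If $u\in S$, set $S'=S\setminus\{\ell\}$. If $u\notin S$, set $S'=(S\setminus\{\ell\})\cup\{u\}$. In both cases $|S'|\le|S|$, and $S'$ contains strictly fewer leaves than $S$, since $u$ is not a leaf. So it suffices to show $S'$ is contagious, which contradicts minimality (or the leaf-minimal choice).

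The key point is that $\ell$ gets activated by $S'$ at the latest at step $1$. Indeed $u\in S'$ and $\tau_\rho(\ell)=\lceil\rho\cdot 1\rceil\le 1$, so $\ell\in S'_1$. (When $\rho=0$ everything activates at step $1$ anyway.) We then compare the processes by a monotonicity argument. We claim $S_t\subseteq S'_{t+1}$ for all $t\ge 0$, and prove it by induction.

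For $t=0$: $S\setminus\{\ell\}\subseteq S'$, and $\ell\in S'_1$, so $S_0\subseteq S'_1$. For the inductive step, let $v\in S_{t+1}\setminus S_t$, so $|N(v)\cap S_t|\ge\tau_\rho(v)$. By induction $S_t\subseteq S'_{t+1}$, hence $|N(v)\cap S'_{t+1}|\ge\tau_\rho(v)$ and $v\in S'_{t+2}$. Vertices already in $S_t$ lie in $S'_{t+1}\subseteq S'_{t+2}$. Thus $S'_{t+1}\supseteq S_t=V(G)$ for large $t$, so $S'$ is contagious.

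In the case $u\in S$, $|S'|<|S|$ contradicts minimality outright. In the other case $|S'|=|S|$, so $S'$ is minimum with fewer leaves, contradicting the choice of $S$.

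The only delicate point is the use of the no-$K_2$ hypothesis. It ensures $u$ is not itself a leaf, so the swap genuinely reduces the leaf count. Without it, in $K_2$ the swap would just trade one leaf for another. I expect no real obstacle beyond stating the monotone comparison $S_t\subseteq S'_{t+1}$ cleanly.
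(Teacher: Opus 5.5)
Your proof is correct and is essentially the paper's argument: swap the leaf for its neighbor, observe that the leaf becomes active at step $1$, and use the no-$K_2$ hypothesis to ensure the neighbor is not a leaf. The differences are cosmetic. You make an extremal choice (a minimum contagious set with fewest leaves) where the paper iterates the swap, and you prove explicitly the comparison $S_t\subseteq S'_{t+1}$ that the paper uses implicitly when it concludes from $S\subseteq S'_1$ that $S'$ is contagious.
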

\begin{proof}
If $\rho = 0$, the result is immediate. Suppose that $\rho > 0$, and let $S$ be a minimum $\rho$-contagious set of $G$. Suppose that $S$ contains a leaf $v$, and let $u$ be its unique neighbor. Let
\[
S'=(S\cup\{u\})\setminus\{v\}.
\]

If $u \in S$, then $S' = S - \{ v \}$. Since $N(v) \cap S' = \{ u \}$ and $|N(v) \cap S'| \ge \lceil \rho d(v) \rceil = 1$, $v \in S'_1$. Hence, $S = S'_0 \cup \{v\} \subseteq S_1'$, and $S'$ is a $\rho$-contagious set, contradicting the minimality of $S$. So $u \notin S$. Now we have
\[ S \subseteq S \cup \{ u \} = S' \cup \{ v \} \subseteq S_1'.\]
Thus, $S'$ is $\rho$-contagious and $|S'| = |S|$. If $S'$ contains a leaf, say $v'$, let $u'$ be its unique neighbor.  Repeating the previous argument with \[ S''=(S'\cup\{u'\})\setminus\{v'\}, \] we obtain another minimum $\rho$-contagious set. Since $G$ has no component isomorphic to $K_2$, the vertex $u'$ is not a leaf. Hence $S''$ contains fewer leaves than $S'$. Repeating this process, we eventually obtain a minimum $\rho$-contagious set containing no leaves.

\end{proof}

Observe that, by the above proposition, to determine the minimum size of $\rho$-contagious sets it is sufficient to consider $\rho$-contagious sets containing no leaves.

\begin{theorem}
Let $d\ge2$ and $k\ge2$. Then
\[
h_\rho(T_{d,k})=
\begin{cases}
0, & \rho=0,\\[2mm]
1, & 0<\rho\le \dfrac{1}{d+1},\\[3mm]
d^{k-2}, & \dfrac{1}{d+1}<\rho\le \dfrac{d}{d+1},\\[3mm]
\beta(T_{d,k}), & \dfrac{d}{d+1}<\rho\le 1.
\end{cases}
\]

\end{theorem}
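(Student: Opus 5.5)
We verify the four ranges of $\rho$ separately, using the degree structure of $T_{d,k}$. The root has degree $d$, every other non-leaf vertex has degree $d+1$, and every leaf has degree $1$. Hence, for $\rho>0$, the thresholds are $\tau_\rho(\text{leaf})=1$, $\tau_\rho(\text{root})=\lceil \rho d\rceil$, and $\tau_\rho(v)=\lceil \rho(d+1)\rceil$ for every other internal vertex $v$.

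\emph{The cases $\rho=0$ and $\rho>d/(d+1)$.} If $\rho=0$, all thresholds are $0$, so $S_1=V$ already for $S=\emptyset$, and $h_0=0$. If $\rho\in(\frac{d}{d+1},1]$, the maximum degree of $T_{d,k}$ is at most $d+1$. Proposition~\ref{prop:vcover} then gives $h_\rho(T_{d,k})=\beta(T_{d,k})$, whose explicit value is given by Proposition~\ref{prop:beta}.

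\emph{The case $0<\rho\le \frac1{d+1}$.} Here $\rho(d+1)\le 1$ and $\rho d<1$, so every vertex has threshold exactly $1$. Starting from any single vertex, activation spreads along edges; since the tree is connected, everything becomes active and $h_\rho\le 1$. Conversely, $\emptyset$ activates nothing because all thresholds are positive, so $h_\rho\ge1$.

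\emph{The middle case $\frac1{d+1}<\rho\le\frac d{d+1}$.} This case carries the real content. For the upper bound, take $S=L_{k-1}$, which has $d^{k-2}$ vertices.
\begin{itemize}
\item All leaves activate at step $1$.
\item Each vertex at level $k-2$ then has $d$ active children. Since $\rho\le d/(d+1)$, its threshold satisfies $\lceil\rho(d+1)\rceil\le d$ for a non-root vertex and $\lceil\rho d\rceil\le d$ for the root, so it activates.
\item Inducting upward level by level activates the whole tree.
\end{itemize}

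For the lower bound, the key step is to show that any contagious $S$ meets each of the $d^{k-2}$ pairwise disjoint closed neighborhoods $N[v]$ with $v\in L_{k-1}$. Suppose $S\cap N[v]=\emptyset$.
\begin{itemize}
\item The children of $v$ are leaves whose only neighbor is $v$. None of them lies in $S$, so none can become active before $v$ does.
\item Hence, at the moment $v$ would activate, at most one neighbor of $v$ is active, namely its parent; if $v$ is the root (when $k=2$), no neighbor is active at all.
\item If $v$ is not the root, its threshold is $\lceil\rho(d+1)\rceil\ge2$ because $\rho>1/(d+1)$. If $v$ is the root, its threshold is $\lceil\rho d\rceil\ge1$.
\end{itemize}
In either case $v$ never activates, a contradiction. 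Disjointness of the sets $N[v]$ then gives $|S|\ge d^{k-2}$.

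The only delicate point is treating the root correctly when $k=2$. Otherwise the argument is a clean packing bound matched by an explicit construction.
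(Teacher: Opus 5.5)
Your handling of $\rho=0$, of $0<\rho\le\frac1{d+1}$, of $\rho>\frac d{d+1}$, and of the upper bound $h_\rho(T_{d,k})\le d^{k-2}$ in the middle range is correct. The lower bound in the middle range, however, rests on a false claim. For $k\ge3$ the closed neighborhoods $N[v]$, $v\in L_{k-1}$, are not pairwise disjoint. Since $d\ge2$, each $p\in L_{k-2}$ is the parent of $d\ge2$ vertices of $L_{k-1}$, so $p$ lies in the closed neighborhood of each of them. The counting step therefore fails as stated. The set $L_{k-2}$ meets every $N[v]$ with $v\in L_{k-1}$, yet it has only $d^{k-3}$ vertices. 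So ``$S$ meets each $N[v]$'' cannot by itself force $|S|\ge d^{k-2}$.

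Your argument already contains the repair, because it never uses that the parent of $v$ avoids $S$. For $v\in L_{k-1}$, let $D(v)$ consist of $v$ and its $d$ children. Suppose $S\cap D(v)=\emptyset$. The children cannot become active before $v$, so while $v$ is inactive at most one of its neighbors (the parent) is active. This is below the threshold $\lceil\rho(d+1)\rceil\ge2$, so $v$ never activates. Equivalently, $D(v)$ is a $\rho$-resistant set and you are invoking the easy direction of Lemma~\ref{resistant graph prop}. The sets $D(v)$ are pairwise disjoint, which gives $|S|\ge d^{k-2}$ for every contagious $S$. For $k=2$ nothing changes, since then $D(v)=N[v]=V(T_{d,2})$.

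With this correction your proof is complete, and it differs from the paper's only in packaging. The paper first uses Proposition~\ref{prop:leafless} to take a minimum contagious set with no leaves, and then shows by the same local argument that this set contains all of $L_{k-1}$. Your corrected disjoint-sets count applies to every contagious set and does not need Proposition~\ref{prop:leafless}.
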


\begin{proof}
If $\rho = 0$, it is immediate. 
If $0<\rho\le \frac{1}{d+1}$, then for every vertex $v$,
\[
0<\rho d_{T_{d,k}}(v)\le 1,
\]
since $d_{T_{d,k}}(v)\le d+1$. Hence,
$\tau_\rho(v)=1$. Thus, a vertex becomes active as soon as at least
one of its neighbors is active. Therefore, $L_1$ is a
$\rho$-contagious set, as $L_2,L_3,\ldots,L_k$ are activated
successively. Since $T_{d,k}$ is nontrivial and connected, $h_\rho(T_{d,k})\ge1$ by Observation~\ref{ob:lower}. Hence,
\[
h_\rho(T_{d,k})=1.
\]
\smallskip

For $\frac{1}{d+1} < \rho \le \frac{d}{d+1}$, if $k=2$, then $T_{d,2}$ is a star, and the root itself forms a
$\rho$-contagious set. Hence,
\[
h_\rho(T_{d,2})=1=d^{k-2}.
\]
Thus, we may assume that $k\ge3$. Let $S$ be a minimum $\rho$-contagious set containing no leaves, whose existence is guaranteed by the previous proposition. Suppose that there is a vertex $u\in L_{k-1}$ such that $u\notin S$. Let $t_0$ be the smallest index such that $u\in S_{t_0}$. Note that none of the $d$ children of $u$ belongs to $S_{t_0-1}$, since they are leaves and cannot be activated before their unique neighbor $u$ is activated. Hence,
\[ 1 \ge |N(u) \cap S_{t_0-1}| \ge \lceil \rho d(u) \rceil = \lceil \rho (d+1) \rceil > 1\]
since $\rho > \frac{1}{d+1}$, a contradiction. Therefore, $L_{k-1} \subseteq S$, and hence $|S| \ge |L_{k-1}|$.

On the other hand, $L_{k-1}$ is a $\rho$-contagious set. Indeed, every
vertex in $L_{k-2}$ has all of its $d$ children in $L_{k-1}$ active, and
\[
\left\lceil \rho(d+1)\right\rceil\le d.
\]
Thus, every vertex in $L_{k-2}$ becomes active, and the same argument successively
activates $L_{k-3},\ldots,L_1$. Also, every leaf in $L_k$ is activated
by its parent in $L_{k-1}$. Hence, $L_{k-1}$ is a minimum
$\rho$-contagious set.

\smallskip
Finally, suppose that
\[
\frac{d}{d+1}<\rho\le 1.
\]
By Proposition~\ref{prop:vcover}, every $\rho$-contagious set of
$T_{d,k}$ is a vertex cover. Hence,
\[
h_\rho(T_{d,k})=\beta(T_{d,k}).
\]
The explicit value of $\beta(T_{d,k})$ is given in
Proposition~\ref{prop:beta}.

\end{proof}

\medskip
\begin{remark}
The known bound in \cite{gentner2017} gives
\[
h_\rho(T)\le \rho \abs{V(T)}
\]
for trees of sufficiently large order. For perfect $d$-ary trees,
the exact value obtained above sharpens this general estimate
considerably. In particular, for $d,k \ge2$ and
$\rho>\frac{d}{d+1}$,
\[
\frac{h_\rho(T_{d,k})}{\abs{V(T_{d,k})}}
\le \frac{1}{d+1}
<\rho.
\]

\end{remark}

\section{ Exact values of $M_{\rho}(n)$}

The main result of this section is the exact value of $M_{\rho}(n)$ for $\rho \in(0,1]$ and $n\geq 2$. Since $M_{\rho}(n)=0$ if and only if $\rho=0$ or $n=1$, we restrict our analysis to the nondegenerate cases.

\begin{theorem}\label{thm:main}
Let $0<\rho\le 1$ and $n\ge 2$.

If $\rho>\frac12$, then
\[
M_\rho(n)=\floor{\frac n2}.
\]

If $0<\rho\le\frac12$, then
\[
M_\rho(n)=
\begin{cases}
1,
&2\le n<2\left(\floor{\frac1\rho}+1\right),\\[1mm]
2,
&2\left(\floor{\frac1\rho}+1\right)
 \le n\le
 3\left(\floor{\frac1\rho}+1\right),\\[1mm]
\displaystyle
\floor{\frac{n-1}{\floor{1/\rho}+1}},
&n\ge 3\left(\floor{\frac1\rho}+1\right)+1.
\end{cases}
\]
\end{theorem}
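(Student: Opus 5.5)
The plan is to prove matching lower and upper bounds. Lower bounds come from explicit trees containing many pairwise disjoint \emph{resistant sets}. Upper bounds come from a greedy, bottom-up construction of a contagious set in an arbitrary tree.

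Call a nonempty set $R\subseteq V(T)$ resistant if every $v\in R$ has fewer than $\tau_\rho(v)$ neighbours outside $R$. First I will show by induction on $t$ that if $S\cap R=\emptyset$ then $S_t\cap R=\emptyset$ for every $t$. Consequently every contagious set meets every resistant set, and $h_\rho(T)$ is at least the maximum number of pairwise disjoint resistant sets.

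\textbf{Case $\rho>\frac12$.} For the upper bound, a vertex cover is contagious by the remark before Proposition~\ref{prop:vcover}. By K\"onig's theorem for bipartite graphs, $h_\rho(T)\le\beta(T)=\nu(T)\le\floor{n/2}$, where $\nu$ is the matching number. For the lower bound, take the path $P_n=v_1\cdots v_n$ and the pairs $\{v_{2i-1},v_{2i}\}$. Each vertex of a pair has at most one neighbour outside it. Its threshold is $1$ if it is a leaf, which has no outside neighbour. Otherwise its threshold is $\ceil{2\rho}=2$. So each pair is resistant, which gives $\floor{n/2}$ disjoint resistant sets.

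\textbf{Case $0<\rho\le\frac12$.} Put $m=\floor{1/\rho}\ge2$. Then $\tau_\rho(v)=1$ whenever $1\le d(v)\le m$, and $\tau_\rho(v)\ge2$ when $d(v)\ge m+1$.
\begin{itemize}
\item \emph{Lower bound $\floor{(n-1)/(m+1)}$.} Take a centre $c$ joined to $k=\floor{(n-1)/(m+1)}$ vertices $u_i$, give each $u_i$ exactly $m$ pendant leaves, and attach the leftover vertices as leaves of $c$. Each $u_i$ has degree $m+1$ and threshold $2$, but only one outside neighbour ($c$). Hence each $\{u_i\}\cup\{\text{its leaves}\}$ is resistant, and these $k$ sets are disjoint.
\item \emph{Lower bound $2$ when $n\ge 2(m+1)$.} Take two adjacent centres, each with $m$ pendant leaves, and hang the remaining vertices as leaves on one centre. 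Each centre together with its own $m$ leaves is resistant, giving two disjoint resistant sets.
\item \emph{Lower bound $1$.} This holds trivially by Observation~\ref{ob:lower}.
\end{itemize}
The threshold $3(m+1)+1$ is exactly where $\floor{(n-1)/(m+1)}$ first exceeds $2$, so these constructions realise the claimed maximum in every range.

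\textbf{Upper bound for $\rho\le\frac12$ (the main obstacle).} I would root $T$ suitably and choose the contagious set greedily from the leaves upward. A vertex $v$ is put into $S$ exactly when the subtrees below it cannot supply $\tau_\rho(v)$ active children. Vertices of degree at most $m$ need only one active neighbour, so they are activated automatically once something adjacent is active. Hence a forced vertex $v$ must have degree at least $m+1$, with its whole lower neighbourhood otherwise unable to activate it.

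The key claim is that each forced vertex can be charged a private set of at least $m+1$ vertices: itself together with enough of its descendants that are not charged to any other forced vertex. The root, or one extra vertex, is left uncharged, which yields $|S|\le\floor{(n-1)/(m+1)}$ once at least two vertices are forced.

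The same charging must also handle the small cases. If only one vertex is forced, then $h_\rho\le1$. For $n\le 3(m+1)$ I must check that at most two vertices are ever forced; the bound $(n-1)/(m+1)$ is too weak to give this by itself. Here I expect to use that the top forced vertex also needs a parent or extra neighbour, so three forced vertices require at least $3(m+1)+1$ vertices.

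Making this charging precise is the step I expect to be hardest. The concern is that a forced vertex's low-degree descendants might be shared with another forced vertex, or used up in activating it. I would first try to define the charged set of a forced vertex $v$ as $v$ together with its children that are not forced and lie in subtrees containing no other forced vertex, and then prove that there are at least $m$ such children by the threshold count $\ceil{\rho(m+1)}=2$.
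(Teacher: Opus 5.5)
There is a genuine gap: the upper bound for $\rho\le\frac12$, which is the heart of the theorem, is left as an unproved charging claim, and the details you do sketch would not work as stated. Your treatment of $\rho>\frac12$ is correct: the vertex-cover-plus-K\H{o}nig upper bound is a more elementary substitute for the paper's packing argument, and the path gives the lower bound. Your constructions for $\rho\le\frac12$ are also correct, up to one slip. In the two-centre tree, the centre carrying $r\ge1$ extra leaves must be taken together with \emph{all} of its pendant leaves. With only $m$ of them it has $r+1$ outside neighbours, which is at least its threshold.

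Three things go wrong in the charging sketch. First, taken literally, the rule ``seed $v$ when its children cannot supply $\tau_\rho(v)$ active neighbours'' seeds every leaf. For a non-root vertex the rule must be ``at most $\tau_\rho(v)-2$ active children'', since the parent can contribute one activation. Second, your tentative charged set fails, because a non-forced child $w$ of a forced vertex $v$ may have forced descendants. For instance, take $\rho=\frac12$ and let $w$ have degree $5$, with two children that are centres of copies of $K_{1,2}$ and two leaf children; then $w$ is not forced but waits for its parent. If $v$ is a non-root vertex of degree $3$ whose children are $w$ and a leaf, then $v$ is forced, yet your set is $\{v,\text{leaf}\}$, of size $2<m+1$. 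The repair is to charge $v$ with \emph{all} its non-forced children, which are automatically private since parents are unique, and to use $d+1-\ceil{\rho d}\ge m$ for $d\ge m+1$. Third, you must root $T$ at a vertex of degree at least $m+1$; if there is none, $h_\rho(T)=1$. Rooted at a leaf, the root can be forced while having degree $1$, and its charged set then has only two vertices.

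Even after these repairs, charging gives only $n\ge |S|(m+1)$. The extra $+1$ needed for $\floor{(n-1)/(m+1)}$ when $|S|\ge3$, and the claim $|S|\le2$ for $n\le 3(m+1)$, both need an equality-case analysis that you only gesture at. If the root is not forced, it is uncharged and you are done. If the root is forced and $n\le |S|(m+1)$, then every vertex is charged and every charged set has exactly $m+1$ vertices. You must then show three things: each forced vertex has degree exactly $m+1$, no forced vertex has a child activated from below, and the non-forced children of forced vertices are leaves. This forces $T$ to be two stars $K_{1,m}$ with adjacent centres, so $|S|=2$.

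This is precisely the step the paper handles differently. By the K\H{o}nig property of subtree hypergraphs (Corollary~\ref{cor:min-max}), $h_\rho(T)$ \emph{equals} the maximum number of pairwise disjoint connected resistant sets, so the upper bound becomes a packing bound. Lemma~\ref{lem:lower-bounds-of-resistance} gives each proper such set at least $m+1$ vertices, and exactly one boundary edge in the case of equality. Proposition~\ref{prop:min-order} then shows that a partition of $V(T)$ into such sets makes the quotient graph $K_2$. Your greedy route could be completed into a more elementary and constructive proof, but as written the decisive inequality is missing.
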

We show the upper bounds in Lemma~\ref{lem:upper-bounds} and then construct trees attaining these bounds in Section~\ref{extremal-trees}.

\subsection{Resistant sets and contagious sets}

\begin{definition}[\cite{zaker2012,centeno2011}]
A nonempty set $F\subseteq V(G)$ is a $\tau$-\textit{resistant set} if
\[
|N_G(v)\setminus F|<\tau(v)
\]
for every $v\in F$. When $\tau$ is clear from the context,
we simply refer to $F$ as a \textit{resistant set}.
\end{definition}

\begin{lemma}\label{resistant graph prop}
A set $S\subseteq V(G)$ is a $\tau$-contagious set if and only if it intersects every
$\tau$-\textit{resistant set}.
\end{lemma}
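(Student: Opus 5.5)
Both directions follow from one observation: the complement of the final active set is resistant (when nonempty), and conversely no vertex of a resistant set can ever be the first of that set to activate. I would therefore argue each direction by contradiction, using the recursion $S_{t+1}=S_t\cup\{v:|N_G(v)\cap S_t|\ge\tau(v)\}$ and the monotonicity $S_t\subseteq S_{t+1}$ noted in Section~2.

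\emph{Contagious sets meet every resistant set.} Suppose $S$ is $\tau$-contagious but $S\cap F=\emptyset$ for some $\tau$-resistant set $F$. Since $S_t=V(G)$ for some $t$ and $F\neq\emptyset$, some vertex of $F$ eventually becomes active. Let $t_0\ge 1$ be the least index with $S_{t_0}\cap F\neq\emptyset$; here $t_0\ge1$ because $S_0\cap F=S\cap F=\emptyset$. Pick $v\in S_{t_0}\cap F$. Then $v\notin S_{t_0-1}$, so $v$ was added by the threshold rule, which gives $|N_G(v)\cap S_{t_0-1}|\ge\tau(v)$. By minimality of $t_0$, $S_{t_0-1}\cap F=\emptyset$, so $N_G(v)\cap S_{t_0-1}\subseteq N_G(v)\setminus F$. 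Hence $|N_G(v)\setminus F|\ge\tau(v)$, contradicting the definition of a resistant set.

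\emph{Sets meeting every resistant set are contagious.} Suppose $S$ meets every resistant set. Since $G$ is finite and the sequence $S_t$ is increasing, it stabilizes at some $S_\infty=S_T=S_{T+1}$. Suppose $F:=V(G)\setminus S_\infty$ is nonempty. For each $v\in F$ we have $v\notin S_{T+1}$, so $|N_G(v)\cap S_T|<\tau(v)$. Since $N_G(v)\cap S_T=N_G(v)\setminus F$, this says $|N_G(v)\setminus F|<\tau(v)$. Thus $F$ is resistant. But $S\subseteq S_\infty$ is disjoint from $F$, a contradiction. Therefore $S_T=V(G)$ and $S$ is contagious.

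\emph{Edge cases.} Zero thresholds cause no difficulty: a vertex with $\tau(v)=0$ can never lie in a resistant set, since $|N_G(v)\setminus F|<0$ is impossible. Such a vertex also always activates at step~1, and both arguments above are consistent with this. The main subtlety is the step in the first direction that identifies the first time $F$ is entered, and checking that this time is at least $1$. The rest of the argument is routine.
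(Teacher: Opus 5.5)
Your proof is correct and takes essentially the same approach as the paper. The paper's first direction is an induction showing $S_t\cap F=\emptyset$ for all $t$, which is your first-entry-time argument phrased as an induction, and for the converse both arguments show that the set of vertices still inactive once the process stabilizes is a resistant set disjoint from $S$.
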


The forward implication was established in \cite[Proposition~1(iii)]{centeno2011}, while both implications were established
under equivalent conditions in \cite[Proposition~2]{zaker2012}.
For clarity, we prove both implications under our assumptions.

\begin{proof}
Suppose that $S$ is disjoint from a resistant set $F$.
We will show by induction that $S_t^G\cap F=\emptyset$ for all $t\geq 0$.
The base case follows from our assumption. Suppose that
$F\cap S_t^G=\emptyset$. For any vertex $v\in F$, we have
\[
|N_G(v)\cap S_t^G|
\leq |N_G(v)\setminus F|
<\tau(v).
\]
Therefore, $v\notin S_{t+1}^G$, and we conclude that $S$ is not a $\tau$-contagious set.

Conversely, suppose that the process starting from $S$ terminates before every
vertex becomes active. Let $F$ be the set of vertices that remain inactive.
Then $F\cap S=\emptyset$. Since no
vertex of $F$ can be activated after the process has terminated, for
every $v\in F$ we have
\[
|N_G(v)\setminus F|<\tau(v).
\]
Thus, $F$ is a $\tau$-resistant set disjoint from $S$.
\end{proof}

\begin{lemma}\label{minimum-intersect-set}
Let $G$ be a graph, and let $\tau$ be a threshold function. Then
\[
h_{\tau}(G)
=
\min\{|U| : U \text{ intersects every connected }
\tau\text{-resistant set of }G\},
\]
where connected means that the induced subgraph is connected.
\end{lemma}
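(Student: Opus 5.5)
By Lemma~\ref{resistant graph prop}, it suffices to prove the following: a set $U\subseteq V(G)$ intersects every $\tau$-resistant set if and only if it intersects every connected $\tau$-resistant set. Once this holds, the family of sets minimized over in the statement coincides with the family of $\tau$-contagious sets, and the two minima agree. One direction is trivial, since every connected resistant set is a resistant set. So the plan reduces to one claim: every $\tau$-resistant set $F$ contains a connected $\tau$-resistant set $F'\subseteq F$. Then any $U$ meeting all connected resistant sets meets $F'$, and hence meets $F$.

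To prove the claim, let $F$ be resistant and let $C$ be the vertex set of any connected component of the induced subgraph $G[F]$. Then $C$ is nonempty, and $G[C]$ is connected. For $v\in C$, every neighbor of $v$ lying in $F$ lies in the same component, so $N_G(v)\cap F\subseteq C$. Equivalently, $N_G(v)\setminus C = N_G(v)\setminus F$. Therefore
\[
|N_G(v)\setminus C| = |N_G(v)\setminus F| < \tau(v)
\]
for every $v\in C$, so $C$ is a connected $\tau$-resistant set contained in $F$.

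No step here is a genuine obstacle. The only point needing care is the identity $N_G(v)\setminus C=N_G(v)\setminus F$, which is exactly where we use that $C$ is a full component of $G[F]$ and not an arbitrary connected subset. We should also note that the minimum on the right is attained and finite, since $U=V(G)$ qualifies. Finally, we combine with Lemma~\ref{resistant graph prop}: $U$ intersects all connected resistant sets if and only if $U$ intersects all resistant sets, which holds if and only if $U$ is $\tau$-contagious. Taking minima over $|U|$ gives the stated equality for $h_\tau(G)$.
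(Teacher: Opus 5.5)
Your proposal is correct and follows essentially the same approach as the paper. Both arguments show that each connected component of $G[F]$ is a connected resistant set, using $N_G(v)\setminus C = N_G(v)\setminus F$. Both then combine this with Lemma~\ref{resistant graph prop} to identify the two families being minimized over.
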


\begin{proof}
Let $F\subseteq V(G)$ be a resistant set, and let $F_1,\ldots,F_\ell$ be the vertex sets of the connected components of $G[F]$. For every $v\in F_i$, since $F_i$ is a connected
component of $G[F]$, the vertex $v$ has no neighbor in $F\setminus F_i$. Hence,
\[
N_G(v)\setminus F_i=N_G(v)\setminus F.
\]
Since $F$ is $\tau$-resistant,
\[
|N_G(v)\setminus F_i|
=
|N_G(v)\setminus F|
<
\tau(v).
\]
Thus, each $F_i$ is itself a connected $\tau$-resistant set. Therefore, a set $S\subseteq V(G)$ intersects every resistant set if and only if it intersects every connected resistant set. By Lemma~\ref{resistant graph prop}, this is equivalent to $S$ being a
$\tau$-contagious set. Hence, the minimum is taken over exactly the same family of sets as in the definition of $h_{\tau}(G)$.
\end{proof}

\begin{definition}
Let $\mathcal H=(V,E)$ be a hypergraph with vertex set $V$ and hyperedge set
$E\subseteq \mathcal{P}(V)\setminus\{\emptyset\}$, where $\mathcal{P}(V)$ is the power set of $V$. A \textit{transversal} is a set of vertices
$U\subseteq V$ such that, for every hyperedge $e\in E$, we have
$U\cap e\neq\emptyset$. The \textit{transversal number} is
\[
c(\mathcal H)=\min\{|U|:U\text{ is a transversal of }\mathcal H\}.
\]

\end{definition}

\begin{definition} Let $\mathcal H=(V,E)$ be a hypergraph. A \textit{matching} is a set of
hyperedges $M\subseteq E$ such that $e_i\cap e_j=\emptyset$ for all distinct
$e_i,e_j\in M$. The \textit{matching number} is
\[
\nu(\mathcal H)=\max\{|M|:M\text{ is a matching of }\mathcal H\}.
\]

\end{definition}

A hypergraph $\mathcal H$ is said to have the K\H{o}nig property if
$c(\mathcal H)=\nu(\mathcal H)$. Given a tree $T=(V,\mathcal E)$, we can
create an associated hypergraph $\mathcal H=(V,E)$ whose hyperedges are the
vertex sets of a family of subtrees of $T$. This associated hypergraph has
the K\H{o}nig property by  \cite[Proposition~2.8]{alain2013}. An example of such an associated hypergraph is illustrated in
Figure~\ref{fig:hyper-graph-k_1,3}.

\begin{corollary}\label{cor:min-max}
Let $T$ be a tree and $\tau:V(T)\to \mathbb N_0$ any threshold function.
Then 
    \[
    h_{\tau}(T)
=
\max\bigl\{|M|:M\text{ is a set of pairwise disjoint connected }
\tau\text{-resistant sets of }T\bigr\}.    
    \]
\end{corollary}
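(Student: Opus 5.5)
The plan is to combine Lemma~\ref{minimum-intersect-set} with the K\H{o}nig property for subtree hypergraphs of trees, \cite[Proposition~2.8]{alain2013}. First we form the hypergraph. Let $\mathcal H=(V(T),E)$, where $E$ is the family of vertex sets of connected $\tau$-resistant sets of $T$. A connected vertex subset of a tree induces a subtree, so every hyperedge is the vertex set of a subtree of $T$. Each hyperedge is nonempty by the definition of a resistant set, so $\mathcal H$ is a legitimate hypergraph in the sense of the definition above.

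Next we read off the two sides of the identity. By Lemma~\ref{minimum-intersect-set}, $h_\tau(T)$ is the minimum size of a set $U\subseteq V(T)$ meeting every connected $\tau$-resistant set. That is exactly the transversal number, so $h_\tau(T)=c(\mathcal H)$. A matching of $\mathcal H$ is a set of pairwise disjoint hyperedges, which is a family of pairwise disjoint connected $\tau$-resistant sets. Hence the right-hand side of the corollary equals $\nu(\mathcal H)$. The K\H{o}nig property $c(\mathcal H)=\nu(\mathcal H)$ then gives the claimed equality.

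We must also handle the degenerate case where $T$ has no resistant set at all, so that $E=\emptyset$. Then the empty set is a transversal, giving $c(\mathcal H)=0$, and the empty matching gives $\nu(\mathcal H)=0$. By Lemma~\ref{resistant graph prop}, $\emptyset$ is contagious, so $h_\tau(T)=0$, and the identity still holds.

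For completeness, the easy inequality can be checked directly without the cited result. A contagious set must contain a distinct vertex from each member of a disjoint family of resistant sets, so $h_\tau(T)\ge\nu(\mathcal H)$. All the difficulty therefore lies in the reverse inequality, and we take it from the cited proposition on subtree hypergraphs. The only step needing care is confirming that our hyperedges satisfy that proposition's hypotheses: they are vertex sets of subtrees, the family is arbitrary, and it may be empty.
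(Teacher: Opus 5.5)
Your proposal is correct and matches the paper's proof. Like the paper, you build the hypergraph of connected $\tau$-resistant sets, identify $h_\tau(T)$ with its transversal number via Lemma~\ref{minimum-intersect-set}, and invoke the K\H{o}nig property of subtree hypergraphs from \cite[Proposition~2.8]{alain2013}. Your separate treatment of the empty hyperedge family and your direct check of $h_\tau(T)\ge\nu$ are harmless additions that the paper leaves implicit.
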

\begin{proof}
    Define the hypergraph $\mathcal H_{\tau}(T)$ to have
vertex set $V(T)$ and hyperedge set consisting of all connected
resistant sets. By Lemma~\ref{minimum-intersect-set}, we have
\[
h_{\tau}(T)=c(\mathcal H_{\tau}(T)).
\]
A matching in $\mathcal{H}_{\tau}(T)$ is a family of pairwise disjoint connected resistant sets of $T$. Since every hyperedge induces a subtree of $T$, the hypergraph $\mathcal{H}_{\tau}(T)$ has the K\H{o}nig property by \cite[Proposition~2.8]{alain2013}. Therefore, 
\[
h_{\tau}(T)= c(\mathcal{H}_{\tau}(T))= \nu(\mathcal{H}_{\tau}(T)).
\]

\end{proof}

\begin{example}
Let $T=K_{1,3}$ and $\rho=1/2$. Label the vertices by
$\{c,l_1,l_2,l_3\}$, where $c$ is the label for the center and
$l_1,l_2,$ and $l_3$ are the labels for the leaves. No singleton is a
$1/2$-resistant set, and any $1/2$-resistant set must be
connected, i.e., it must contain $c$. The set
$\{c,l_i\}$ is not a $1/2$-resistant set for any leaf $l_i$.
It is easy to check that $\{c,l_1,l_2\}$, $\{c,l_1,l_3\}$,
$\{c,l_2,l_3\}$, and $\{c,l_1,l_2,l_3\}$ are all the
resistant sets. The associated hypergraph is then illustrated in
Figure~\ref{fig:hyper-graph-k_1,3}. 

 \begin{figure}[ht]
        \centering
        \begin{tikzpicture}[font=\small]
  \coordinate (c)  at (0,0);
  \coordinate (l1) at (90:2.25);
  \coordinate (l2) at (210:2.25);
  \coordinate (l3) at (330:2.25);

  \begin{scope}[on background layer]
    \path[
      full resistant edge,
      rounded corners=10pt
    ]
      (90:3.08) -- (210:3.08) -- (330:3.08) -- cycle;
    \rotatedresistantedge{0}
    \rotatedresistantedge{120}
    \rotatedresistantedge{240}
  \end{scope}

  \draw[tree edge] (c) -- (l1);
  \draw[tree edge] (c) -- (l2);
  \draw[tree edge] (c) -- (l3);

  \node[vertex] at (c)  {};
  \node[vertex] at (l1) {};
  \node[vertex] at (l2) {};
  \node[vertex] at (l3) {};

  \node[vertex label,below=3pt] at (c)  {$c$};
  \node[vertex label,below=3pt] at (l1) {$l_1$};
  \node[vertex label,below=3pt] at (l2) {$l_2$};
  \node[vertex label,below=3pt] at (l3) {$l_3$};
\end{tikzpicture}
\caption{The hypergraph $\mathcal{H}_{1/2}(K_{1,3})$}
\label{fig:hyper-graph-k_1,3}
\end{figure}

\end{example}

More generally, for $n\geq 1$ and $\rho \in (0,1]$, the center $c$ of $K_{1,n}$ is in every $\rho$-resistant set. Thus, by Lemma~\ref{resistant graph prop}, $h_{\rho}(K_{1,n})=1$.

\subsection{Bounds from resistant sets}
For a tree $T$ and a set of vertices $F\subseteq V(T)$, let $\delta_T(F)$ denote the set of edges with one endpoint in $F$ and the other in $V(T)\setminus F$.

\begin{lemma}\label{lem:lower-bounds-of-resistance}
Let $T$ be a tree, and let $F\subset V(T)$ be a proper connected $\rho$-resistant set.
\begin{enumerate}
\item If $\rho>\frac12$, then $\abs{F}\ge 2$.
\item If $0<\rho\le\frac12$, then
\[
\abs{F}\ge \floor{\frac1\rho}+1.
\]
If equality holds, then $\abs{\delta_T(F)}=1$.
\end{enumerate}

\end{lemma}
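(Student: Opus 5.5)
Since $F$ is a proper connected subset of the tree $T$, which is connected, there is at least one edge leaving $F$; that is, $\abs{\delta_T(F)}\ge 1$. The plan is to choose a vertex $v\in F$ incident to an edge of $\delta_T(F)$ and apply the resistance condition at $v$. This gives
\[
1\le \abs{N_T(v)\setminus F} < \ceil{\rho d_T(v)},
\]
so $\ceil{\rho d_T(v)}\ge 2$, which means $\rho d_T(v)>1$. The same inequality also gives $\abs{N_T(v)\setminus F}\le \ceil{\rho d_T(v)}-1$, and hence
\[
\abs{N_T(v)\cap F}\ge d_T(v)-\ceil{\rho d_T(v)}+1 .
\]
Since $F$ is connected and contains $v$, we have $\abs{F}\ge 1+\abs{N_T(v)\cap F}$. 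This yields the bound
\[
\abs{F}\ge d_T(v)-\ceil{\rho d_T(v)}+2 .
\]
Both parts of the lemma will follow from this single-vertex bound.

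For part (1), with $\rho>\frac12$, we have $\ceil{\rho d}\le d$, so the bound gives $\abs{F}\ge 2$ directly. Alternatively, a singleton $\{v\}$ would need $d_T(v)<\ceil{\rho d_T(v)}\le d_T(v)$, which is impossible; for $d_T(v)=0$ the tree would be trivial, contradicting that $F$ is proper.

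For part (2), with $0<\rho\le\frac12$, write $d=d_T(v)$ and $m=\floor{1/\rho}$. From $\rho d>1$ we get $d>1/\rho\ge m$, so $d\ge m+1$. The next step is to show that $g(d)=d-\ceil{\rho d}$ is nondecreasing in $d$: going from $d$ to $d+1$ raises $\ceil{\rho d}$ by at most one, because $\rho\le 1$. Consequently $g(d)\ge g(m+1)$. Because $\rho(m+1)>1$ and $\rho(m+1)\le \rho\cdot\frac{2}{\rho}=2$ (using $m\le 1/\rho$ and $1\le 1/\rho$), we have $\ceil{\rho(m+1)}=2$. Therefore
\[
\abs{F}\ge g(m+1)+2=(m+1)-2+2=m+1 .
\]
This is the main inequality of part (2).

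The equality statement is the delicate step. Assume $\abs{F}=m+1$ but $\abs{\delta_T(F)}\ge 2$. First, if two boundary edges leave the same vertex $v$, then $\abs{N(v)\setminus F}\ge 2$, so $\ceil{\rho d}\ge 3$ and $\rho d>2$, giving $d>2/\rho\ge 2m$. Here $g(d)\ge d-\ceil{\rho d}$ with $\ceil{\rho d}\le \ceil{\rho}+\dots$; rather than chasing this estimate, I would use the sharper count
\[
\abs{F}\ge 1+d-\abs{N(v)\setminus F}\ge 1+d-(\ceil{\rho d}-1),
\]
and check that it exceeds $m+1$ whenever $\rho d>2$. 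Second, if the boundary edges leave distinct vertices $v\ne w$ in $F$, then applying the count at $v$ shows that $v$ already has at least $m$ neighbours inside $F$. In the equality case this forces $F=\{v\}\cup (N(v)\cap F)$ with $\abs{N(v)\cap F}=m$. Then $w$ is a leaf of $T[F]$, so $d_T(w)=1+\abs{N(w)\setminus F}$. The resistance condition at $w$ requires $\abs{N(w)\setminus F}<\ceil{\rho(1+\abs{N(w)\setminus F})}$, and this fails for $\rho\le\frac12$: for $k\ge1$ we have $\rho(1+k)\le (1+k)/2\le k$, so $\ceil{\rho(1+k)}\le k$. That contradiction would finish the proof.

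The main obstacle I expect is the first subcase, where several boundary edges share one vertex. There I would use monotonicity once more: $d-\ceil{\rho d}$ is at least its value at the smallest $d$ with $\ceil{\rho d}\ge 3$, which is at least $2m+1>m$ for $m\ge 2$. The case $m=1$ needs a quick separate check, but it is covered because $\rho\le\frac12$ forces $m\ge 2$.
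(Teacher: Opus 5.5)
Your proof is correct, but it takes a different route from the paper's.

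The paper sums the local inequality $\lfloor 1/\rho\rfloor\,|N_T(v)\setminus F|\le d_T(v)-1$ over all $v\in F$. It then uses $|E(T[F])|=|F|-1$, which is the only place connectivity of $F$ enters, to obtain the single global inequality $(\lfloor 1/\rho\rfloor-1)\,|\delta_T(F)|\le |F|-2$. Both parts and the equality statement follow from this in one line. It also gives extra information: $|F|$ grows linearly with the number of boundary edges.

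You instead count at one boundary vertex and use the monotonicity of $d-\lceil\rho d\rceil$. This proves $|F|\ge\lfloor 1/\rho\rfloor+1$ without using that $F$ is connected or that $T$ is a tree; the bound holds for any proper resistant set in a connected graph. The price is a two-subcase analysis for equality, and acyclicity is needed in the second subcase.

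There is one slip in your first subcase. The value of $g$ at the smallest $d_0$ with $\rho d_0>2$ is not at least $2m+1$: for $\rho=\tfrac12$ one has $d_0=5$ and $g(d_0)=2$. What is true is that $d_0\ge 2m+1$ and $\lceil\rho d_0\rceil=3$, because $\rho d_0\le 2+\rho<3$. Hence $g(d_0)=d_0-3\ge 2m-2\ge m$ for $m\ge 2$, which gives $|F|\ge m+2$.

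There is also a more direct fix. With $k=|N_T(v)\setminus F|\ge 2$, the inequality $k<\rho d\le d/m$ gives $|N_T(v)\cap F|=d-k\ge (m-1)k+1\ge 2m-1$. So $|F|\ge 2m>m+1$.
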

\begin{proof}
    For each $v\in F$, the resistant set condition and
$\rho\le 1/\floor{1/\rho}$ imply
\[
\floor{\frac1\rho}\abs{N_T(v)\setminus F}
\le d_T(v)-1.
\]
Therefore,
\begin{equation}\label{eq:local-resistant}
\left(\floor{\frac1\rho}-1\right)
\abs{N_T(v)\setminus F}
\le \abs{N_T(v)\cap F}-1.
\end{equation}

It is easy to see that $\sum_{v\in F}\abs{N_{T}(v)\setminus F}=\abs{\delta_{T}(F)}$. Moreover, since $F$ is connected and $T$ is a tree, the induced
subgraph $T[F]$ is a tree with $\abs F-1$ edges. Hence, by the
degree-sum formula,
\[
\sum_{v\in F}\abs{N_T(v)\cap F}
=
2\abs{E(T[F])}
=
2(\abs F-1).
\]
Thus, summing both sides of (\ref{eq:local-resistant}), we get 
\begin{equation}\label{eq:summed F}
  \left( \floor{\frac{1}{\rho}}-1\right) \abs{\delta_{T}(F)} \leq \abs F-2
\end{equation}

If $\rho>1/2$, then (\ref{eq:summed F}) gives $\abs{F}\geq 2$. Now assume $0<\rho \leq 1/2$. Since $F$ is proper and $T$ is connected, $|\delta_{T}(F)|\geq 1$. Then (\ref{eq:summed F}) gives 
$
\abs{F}\ge \floor{1/\rho}+1
$.
If 
$
\abs{F}= \floor{1/\rho}+1
$,
then (\ref{eq:summed F}) gives $\abs{\delta_{T}(F)}\leq 1$, so $\abs{\delta_{T}(F)}= 1$.
\end{proof}
\begin{proposition}\label{prop:min-order}
Let $T$ be a tree with $\abs{T}=n\geq 2$ and $m=h_{\rho}(T)$.
\begin{enumerate}
\item If $\rho>\frac12$, then $n\ge 2m$.
\item If $0<\rho\le\frac12$ and $m\ge3$, then
\[
n\ge m\left(\floor{\frac1\rho}+1\right)+1.
\]
\item If $0<\rho\le\frac12$ and $m=2$, then
\[
n\ge 2\left(\floor{\frac1\rho}+1\right).
\]
\end{enumerate}

\end{proposition}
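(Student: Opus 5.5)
By Corollary~\ref{cor:min-max}, $m=h_\rho(T)$ equals the maximum number of pairwise disjoint connected $\rho$-resistant sets of $T$. The plan is to fix such a family $F_1,\ldots,F_m$ and bound $n\ge\sum_i\abs{F_i}$ from below using Lemma~\ref{lem:lower-bounds-of-resistance}.

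\textbf{Proper sets.} A resistant set can fail to be proper only if it equals $V(T)$. If $F_i=V(T)$, then disjointness forces $m=1$. So in every case with $m\ge2$, each $F_i$ is a proper connected resistant set.

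\textbf{Part (1).} For $\rho>\frac12$ and $m\ge2$, Lemma~\ref{lem:lower-bounds-of-resistance}(1) gives $\abs{F_i}\ge2$ for each $i$, so $n\ge2m$. For $m=1$ the bound $n\ge2$ is the hypothesis.

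\textbf{Part (3).} For $0<\rho\le\frac12$ and $m=2$, the lemma gives $\abs{F_i}\ge\floor{1/\rho}+1$ for each $i$. Summing yields $n\ge2(\floor{1/\rho}+1)$.

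\textbf{Part (2).} Here $m\ge3$ and we need one extra vertex. Write $q=\floor{1/\rho}+1$. Summing the lemma's bound gives $n\ge mq$, and we must exclude $n=mq$. Suppose $n=mq$. Then $V(T)=F_1\cup\cdots\cup F_m$ and every $\abs{F_i}=q$. By the equality clause of Lemma~\ref{lem:lower-bounds-of-resistance}(2), $\abs{\delta_T(F_i)}=1$ for every $i$.

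Contract each $F_i$ to a single vertex. Since each $F_i$ is connected and $T$ is a tree, the result is a tree on $m$ vertices. Every edge of $T$ not inside some $F_i$ joins two different sets, because the $F_i$ cover $V(T)$. Hence each contracted vertex has degree exactly $\abs{\delta_T(F_i)}=1$.

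A tree on $m\ge3$ vertices cannot have all degrees equal to $1$. Equivalently, the degree sum would be $m$, while it must equal $2(m-1)>m$. This contradiction gives $n\ge mq+1$.

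The only delicate step is checking that the contraction is a tree. This holds because each $F_i$ induces a connected subtree of $T$, and contracting the connected pieces of a partition of a tree yields a tree. Note also that the argument genuinely needs $m\ge3$: when $m=2$, two sets joined by a single edge is consistent, which is why part (3) has no $+1$.
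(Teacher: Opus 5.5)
Your proposal is correct and follows the paper's proof essentially step for step: you fix $m$ disjoint connected resistant sets, note they are proper once $m\ge2$, sum the bounds from Lemma~\ref{lem:lower-bounds-of-resistance}, and in part (2) rule out $n=m(\floor{1/\rho}+1)$ by contracting the resulting partition. The only cosmetic difference is the final step: you use the degree sum of the contracted tree, whereas the paper shows the quotient graph is connected with every vertex of degree $1$, hence isomorphic to $K_2$.
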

\begin{proof}

By Corollary~\ref{cor:min-max}, $T$ contains $m$ pairwise disjoint connected resistant sets
\[
F_1,\dots , F_m.
\]

If $m=1$, then the result follows immediately from $n\geq 2$. Otherwise, $m\geq 2$, which means that each $F_i$ is proper.

\noindent\emph{Case 1:} $\rho >1/2$

By Lemma~\ref{lem:lower-bounds-of-resistance}, each $\abs{F_i}\geq 2$. Because the sets $F_1,\dots , F_m$ are pairwise disjoint,
\[
n\geq \sum_{i=1}^m \abs{F_i}\geq 2m.
\]

\noindent\emph{Case 2:} $0<\rho \leq 1/2$ and $m\geq 3$

Suppose, for a contradiction, that $n\leq m(\floor{1/\rho}+1)$. By Lemma~\ref{lem:lower-bounds-of-resistance}, each $\abs{F_i}\geq \floor{1/\rho}+1$, so
\begin{align*}
n\geq \sum_{i=1}^m \abs{F_i}
\geq m\left(\floor{\frac{1}{\rho}}+1\right).
\end{align*}
Therefore, $n=m(\floor{1/\rho}+1)$, and the sets $F_1,\dots,F_m$ partition $V(T)$, each with size $\floor{1/\rho}+1$. We will construct a graph whose
vertices are the blocks of this partition and show that
it is connected and that every vertex has degree $1$.
Such a graph must be isomorphic to $K_2$, which would
force $m=2$, contradicting our assumption that $m\geq 3$.

Let $\mathcal F$ be a graph with vertex set $\{F_1,\dots,F_m\}$ and an edge between $F_j$ and $F_\ell$ for $j\neq \ell$ if and only if there are vertices $x\in F_j$ and $y\in F_\ell$ that are adjacent in $T$.

Let $F_j$ and $F_\ell$ be two vertices of $\mathcal F$ with $j\neq \ell$. Take $x\in F_j$ and $y\in F_\ell$. 
Since $T$ is connected, there is a path between $x$ and $y$ in $T$. Since $F_1,\ldots,F_m$ form a partition of $V(T)$, each vertex of $T$ belongs to exactly one $F_i$. Thus, recording the sets containing the vertices
of this path, with consecutive repetitions removed, yields an $F_j, F_\ell$-walk in $\mathcal F$. Since every walk between two vertices contains a path between them  \cite[Lemma~1.2.5]{west2001}, there is an $F_j, F_\ell$-path in $\mathcal F$. Hence, $\mathcal F$ is connected.

Since $\abs{F_i}=\floor{1/\rho}+1$ for every $i$, Lemma~\ref{lem:lower-bounds-of-resistance} gives $\abs{\delta_T(F_i)}=1$; therefore, each vertex in $\mathcal F$ has degree $1$. However, every connected graph in which each vertex has degree $1$ is isomorphic to $K_2$, contradicting $\abs{\mathcal F}=m\geq 3$. Therefore,
\[
n\geq m\left(\floor{\frac{1}{\rho}}+1\right)+1.
\]

\noindent\emph{Case 3:} $0<\rho \leq 1/2$ and $m=2$

Lemma~\ref{lem:lower-bounds-of-resistance} gives $\abs{F_i}\geq \floor{1/\rho}+1$ for each $i$. As $F_1$ and $F_2$ are disjoint, we have
\[
n\geq 2\left(\floor{\frac{1}{\rho}}+1\right).
\]

\end{proof}
\begin{lemma}\label{lem:upper-bounds}
Let $T$ be a tree with $n=\abs{T}\geq 2$ and $m=h_{\rho}(T)$.
\begin{enumerate}
\item If $\rho>1/2$, then
\[
m\leq \floor{\frac{n}{2}}.
\]

\item Suppose that $0<\rho\leq 1/2$. Then
\[
m\leq
\begin{cases}
    1, & 2\leq n<2(\floor{1/\rho}+1),\\
    2, & 2(\floor{1/\rho}+1)\leq n\leq 3(\floor{1/\rho}+1),\\
    \floor{\dfrac{n-1}{\floor{1/\rho}+1}},
    & n\geq 3(\floor{1/\rho}+1)+1.
\end{cases}
\]
\end{enumerate}
\end{lemma}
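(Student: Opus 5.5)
This lemma is essentially a rearrangement of Proposition~\ref{prop:min-order}: each lower bound on $n$ in terms of $m$ is inverted into an upper bound on $m$ in terms of $n$. Throughout, write $q=\floor{1/\rho}+1$, and note that $q\ge 3$ when $\rho\le 1/2$.

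For part (1), Proposition~\ref{prop:min-order}(1) gives $n\ge 2m$. Since $m$ is an integer, $m\le n/2$ yields $m\le\floor{n/2}$.

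For part (2), I would fix $n$ and argue by contradiction in each regime, using the contrapositives of parts (2) and (3) of Proposition~\ref{prop:min-order}.

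\emph{First regime, $2\le n<2q$.} Suppose $m\ge 2$. If $m=2$, Proposition~\ref{prop:min-order}(3) gives $n\ge 2q$. If $m\ge 3$, Proposition~\ref{prop:min-order}(2) gives $n\ge mq+1>2q$. Either way we contradict $n<2q$, so $m\le 1$.

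\emph{Second regime, $2q\le n\le 3q$.} Suppose $m\ge 3$. Then Proposition~\ref{prop:min-order}(2) gives $n\ge mq+1\ge 3q+1$, a contradiction. Hence $m\le 2$.

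\emph{Third regime, $n\ge 3q+1$.} If $m\ge 3$, Proposition~\ref{prop:min-order}(2) gives $mq\le n-1$. By integrality, $m\le\floor{(n-1)/q}$. If $m\le 2$, we need only observe that $(n-1)/q\ge 3$, so $\floor{(n-1)/q}\ge 3>m$.

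I expect no real obstacle. The only care needed is the bookkeeping in the small cases $m\in\{1,2\}$, which Proposition~\ref{prop:min-order}(2) does not cover. Those are handled by Proposition~\ref{prop:min-order}(3), or trivially in the last regime. Case $m=1$ needs nothing in any regime, because each bound is at least $1$.
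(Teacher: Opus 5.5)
Your proposal is correct and follows the paper's proof essentially verbatim: each regime is handled by inverting the lower bounds of Proposition~\ref{prop:min-order}, and the cases $m\le 2$ in the last regime are dismissed trivially. Your explicit split of the first regime into $m=2$ (via part (3)) and $m\ge 3$ (via part (2)) just spells out the contrapositive that the paper invokes in one line.
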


\begin{proof}
If $\rho>1/2$, then Proposition~\ref{prop:min-order} gives the bound.

Now assume that $0<\rho\leq 1/2$. If
$n<2(\floor{1/\rho}+1)$, then the contrapositive of
Proposition~\ref{prop:min-order} gives $m<2$. Since $m\geq 1$, we have
$m=1$.

If
$2(\floor{1/\rho}+1)\leq n\leq 3(\floor{1/\rho}+1)$, then
Proposition~\ref{prop:min-order} gives $m\leq 2$.

Suppose that $n\geq 3(\floor{1/\rho}+1)+1$. If $m\geq 3$, then
Proposition~\ref{prop:min-order} gives
\[
m\leq \floor{\frac{n-1}{\floor{1/\rho}+1}}.
\]
If $m\leq 2$, the bound still holds.
\end{proof}

\subsection{Extremal trees}\label{extremal-trees}

\noindent\emph{Case 1:}  $\rho>1/2$ and $n\geq 2$.

Let $T=P_n$ with vertices $v_1,\dots,v_n$ in order. Since a degree-two
vertex has threshold $\ceil{2\rho}=2$, each of the sets
\[
\{v_1,v_2\},\ \{v_3,v_4\},\ \ldots
\]
is a resistant set. There are $\floor{n/2}$ pairwise disjoint such resistant sets, so
Corollary~\ref{cor:min-max} gives
\[
h_\rho(P_n)\geq\floor{\frac n2}.
\]
Together with the upper bound from Lemma~\ref{lem:upper-bounds}, this shows that $h_\rho(P_n)=\floor{n/2}$.
\\

\noindent\emph{Case 2:} $0<\rho\leq 1/2$  and $2\leq n < 2(\floor{1/\rho}+1)$.

The proof of Lemma~\ref{lem:upper-bounds} shows that $h_\rho(T)=1$ for every tree $T$ of order $n$ in this range.
\\

\noindent\emph{Case 3:} $0<\rho\leq 1/2$ and $2(\floor{1/\rho}+1)\leq n\leq 3(\floor{1/\rho}+1)$. 

Take a path with endpoints $x$ and $y$ and $n-2(\floor{1/\rho}+1)$ internal vertices. Attach $\floor{1/\rho}$ leaves to each of $x$ and $y$, and call this graph $T$. Then $\abs{T}=n$, and $T$ is a tree. The degree of each of $x$ and $y$ is $\floor{1/\rho}+1$, so
\[
\tau_{\rho}(x) = \ceil{\rho\floor{1/\rho}+\rho}=2.
\]
Thus, $x$ together with all of its leaves is a resistant set, and likewise for $y$. By Corollary~\ref{cor:min-max}, we get $h_{\rho}(T)\geq 2$. Together with the upper bound from Lemma~\ref{lem:upper-bounds}, this shows that $h_{\rho}(T)=2$. This construction is shown in Figure~\ref{fig:extremal-trees}(\subref{fig:extremal-case3}).
\\

\noindent\emph{Case 4:} $0<\rho\leq 1/2$ and $n\geq 3(\floor{1/\rho}+1)+1$.

Set 
\[
m=\floor{\frac{n-1}{\floor{1/\rho}+1}}.
\]
Let $c$ be a vertex, and attach vertices $x_1,\dots,x_m$ to $c$. Attach $\floor{1/\rho}$ leaves to each $x_i$. Take the edge $cx_1$ and subdivide it $n-(1+m(\floor{1/\rho}+1))$ times. Denote the resulting graph by $T$. It is clear that $T$ is a tree of order $n$. Each $x_i$, together with its leaf neighbors, forms a resistant set. These sets are pairwise disjoint; hence, $h_{\rho}(T)\geq m$. Together with the upper bound from Lemma~\ref{lem:upper-bounds}, this shows that $h_{\rho}(T)=m$. This construction is shown in Figure~\ref{fig:extremal-trees}(\subref{fig:extremal-case4}).

\begin{figure}[htbp]
    \centering

    \begin{subfigure}{\textwidth}
        \centering
        \begin{tikzpicture}[extremal tree]
  \coordinate (x)  at (-3.2,0);
  \coordinate (u1) at (-2.0,0);
  \coordinate (v1) at ( 2.0,0);
  \coordinate (y)  at ( 3.2,0);

  \draw[construction edge]
    (x)--(u1)--(-0.40,0);

  \draw[construction edge]
    (0.40,0)--(v1)--(y);

  \node[font=\large,fill=white,inner sep=2pt]
    at (0,0) {$\cdots$};

  \foreach \angle in {112.5,157.5,202.5,247.5}{
    \draw[construction edge]
      (x)--++(\angle:1.05)
      node[tree vertex] {};
  }

  \foreach \angle in {218,225,232}{
    \path (x) ++(\angle:1.05)
      node[circle,fill=black,inner sep=0pt,
           minimum size=0.9pt] {};
  }

  \foreach \angle in {-67.5,-22.5,22.5,67.5}{
    \draw[construction edge]
      (y)--++(\angle:1.05)
      node[tree vertex] {};
  }

  \foreach \angle in {-52,-45,-38}{
    \path (y) ++(\angle:1.05)
      node[circle,fill=black,inner sep=0pt,
           minimum size=0.9pt] {};
  }

  \foreach \p in {x,u1,v1,y}{
    \node[tree vertex] at (\p) {};
  }

  \node[tree label,below right=3pt] at (x) {$x$};
  \node[tree label,below left=3pt] at (y) {$y$};
\end{tikzpicture}
        \caption{The construction in Case 3.}
        \label{fig:extremal-case3}
    \end{subfigure}

    \par\medskip

    \begin{subfigure}{\textwidth}
        \centering
        \begin{tikzpicture}[extremal tree]
  \coordinate (center) at (0,0);
  \coordinate (s1) at (1.15,0);
  \coordinate (s2) at (2.30,0);
  \coordinate (x1) at (3.45,0);

  \draw[construction edge]
    (center)--(s1)--(s2)--(x1);

  \coordinate (x2) at (-2.4,1.4);
  \coordinate (xm) at (-2.4,-1.4);

  \draw[construction edge] (center)--(x2);
  \draw[construction edge] (center)--(xm);

  \foreach \angle in {-67.5,-22.5,22.5,67.5}{
    \draw[construction edge]
      (x1)--++(\angle:1.05)
      node[tree vertex] {};
  }

  \foreach \angle in {-52,-45,-38}{
    \path (x1) ++(\angle:1.05)
      node[circle,fill=black,inner sep=0pt,
           minimum size=0.9pt] {};
  }

  \foreach \angle in {22.5,67.5,112.5,157.5}{
    \draw[construction edge]
      (x2)--++(\angle:1.05)
      node[tree vertex] {};
  }

  \foreach \angle in {38,45,52}{
    \path (x2) ++(\angle:1.05)
      node[circle,fill=black,inner sep=0pt,
           minimum size=0.9pt] {};
  }

  \foreach \angle in {202.5,247.5,292.5,337.5}{
    \draw[construction edge]
      (xm)--++(\angle:1.05)
      node[tree vertex] {};
  }

  \foreach \angle in {308,315,322}{
    \path (xm) ++(\angle:1.05)
      node[circle,fill=black,inner sep=0pt,
           minimum size=0.9pt] {};
  }

  \foreach \p in {center,s1,s2,x1,x2,xm}{
    \node[tree vertex] at (\p) {};
  }

  \node[tree label,below right=3pt] at (center) {$c$};
  \node[tree label,below left =3pt] at (x1) {$x_1$};
  \node[tree label,below left=5pt] at (x2) {$x_2$};
  \node[tree label,above left=5pt] at (xm) {$x_m$};

  \node[font=\large] at (-1.75,0) {$\vdots$};

  \node[font=\large,fill=white,inner sep=1pt]
    at (1.72,0) {$\cdots$};

\end{tikzpicture}
        \caption{The construction in Case 4.}
        \label{fig:extremal-case4}
    \end{subfigure}

    \caption{Each of the vertices $x,y,x_1,\dots,x_m$ has $\floor{1/\rho}$ attached leaves.}
    \label{fig:extremal-trees}
\end{figure}

\section{Edge addition and $h_\rho$}

For nonadjacent vertices $x, y$ of a graph $G$, let $G +xy$ denote the graph obtained by adding the edge $xy$.

\medskip

\begin{lemma}\label{lem:process-comparison}
Let $G=H+xy$.
\begin{enumerate}
    \item For every $S\subseteq V(H)$,
    \[
    S_t^H\subseteq (S\cup\{x\})_t^G
    \]
    for all $t\ge 0$. By symmetry,
    \[
    S_t^H\subseteq (S\cup\{y\})_t^G
    \]
    for all $t\ge 0$.

    \item Let $C\subseteq V(G)$, and suppose, without loss of generality,
    that $x$ becomes active no later than $y$ in the activation process
    of $G$ starting from $C$. Then
    \[
    C_t^G\subseteq (C\cup\{y\})_t^H
    \]
    for all $t\ge 0$.
\end{enumerate}
\end{lemma}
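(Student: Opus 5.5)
Both parts go by induction on $t$. They rest on one comparison between the two graphs. Every vertex $v\notin\{x,y\}$ satisfies $N_G(v)=N_H(v)$ and $\tau_\rho^G(v)=\tau_\rho^H(v)$. For $v\in\{x,y\}$ we have $d_G(v)=d_H(v)+1$, and hence
\[
\ceil{\rho d_H(v)}\le \ceil{\rho d_G(v)}=\ceil{\rho d_H(v)+\rho}\le \ceil{\rho d_H(v)}+1 ,
\]
where the last inequality uses $\rho\le 1$. So at $x$ and $y$ the threshold in $G$ is at least the threshold in $H$ and exceeds it by at most one. In each inductive step we only need to look at a vertex $v$ in the set at time $t+1$ but not at time $t$; vertices already present at time $t$ are covered by the inductive hypothesis and monotonicity.

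For part (1), the base case $S_0^H=S\subseteq S\cup\{x\}$ is trivial. Suppose $S_t^H\subseteq (S\cup\{x\})_t^G$ and $v\in S_{t+1}^H\setminus S_t^H$. There are three cases.
\begin{enumerate}
\item If $v\notin\{x,y\}$, then $|N_G(v)\cap (S\cup\{x\})_t^G|\ge |N_H(v)\cap S_t^H|\ge \tau_\rho^H(v)=\tau_\rho^G(v)$, so $v\in(S\cup\{x\})_{t+1}^G$.
\item If $v=x$, then $v$ is in the initial set and there is nothing to show.
\item If $v=y$, then $N_G(y)=N_H(y)\cup\{x\}$, and $x$ is active in $G$ from time $0$. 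So $y$ has at least $\tau_\rho^H(y)+1\ge\tau_\rho^G(y)$ active neighbours in $G$ at time $t$, and it activates.
\end{enumerate}
The statement with $y$ in place of $x$ follows by symmetry.

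For part (2), the base case is again trivial. Assume $C_t^G\subseteq (C\cup\{y\})_t^H$ and let $v\in C_{t+1}^G\setminus C_t^G$.
\begin{enumerate}
\item If $v\notin\{x,y\}$, the neighbourhoods and thresholds agree, and the inductive hypothesis gives $v\in (C\cup\{y\})_{t+1}^H$.
\item If $v=y$, it lies in the initial set $C\cup\{y\}$.
\item If $v=x$, we use the hypothesis that $x$ becomes active no later than $y$. Since $x\notin C_t^G$, also $y\notin C_t^G$. Hence $N_G(x)\cap C_t^G=N_H(x)\cap C_t^G$, and
\[
|N_H(x)\cap (C\cup\{y\})_t^H|\ge |N_G(x)\cap C_t^G|\ge\tau_\rho^G(x)\ge\tau_\rho^H(x).
\]
So $x\in (C\cup\{y\})_{t+1}^H$.
\end{enumerate}

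The only delicate point is this last case of part (2). There the activation of $x$ in $G$ must not have used the new edge $xy$, and the ordering hypothesis is exactly what guarantees this. The rest is bookkeeping with the threshold inequalities above.
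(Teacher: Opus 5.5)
Your proof is correct and follows the paper's argument essentially step for step. It uses induction on $t$ in both parts with the same case split on whether the newly activated vertex is $x$, $y$, or neither, and the bound $\lceil\rho(d_H(y)+1)\rceil\le\lceil\rho d_H(y)\rceil+1$ for $v=y$ in part (1). In part (2) it uses the ordering hypothesis to get $y\notin C_t^G$, so that the activation of $x$ in $G$ never uses the edge $xy$. The "$+1$" count for $y$ implicitly relies on $x\notin N_H(y)$, which holds because $x$ and $y$ are nonadjacent in $H$.
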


\begin{proof}
For (1), we prove the first inclusion by induction on $t$. If $t=0$, then
\[
S_0^H=S\subseteq S\cup\{x\}=(S\cup\{x\})_0^G.
\]

Suppose that the claim holds for $t-1$, that is,
\[
S_{t-1}^H\subseteq (S\cup\{x\})_{t-1}^G.
\]
Let $z\in S_t^H$. If $z\in S_{t-1}^H$, then by the induction hypothesis,
\[
z\in S_{t-1}^H
\subseteq (S\cup\{x\})_{t-1}^G
\subseteq (S\cup\{x\})_t^G.
\]

Now suppose that $z\in S_t^H\setminus S_{t-1}^H$. Then
\[
|N_H(z)\cap S_{t-1}^H|
\ge
\left\lceil \rho d_H(z)\right\rceil.
\]

If $z=x$, then
\[
x\in (S\cup\{x\})_0^G
\subseteq (S\cup\{x\})_t^G.
\]

If $z\notin\{x,y\}$, then
\[
N_G(z)=N_H(z)
\quad\text{and}\quad
d_G(z)=d_H(z).
\]
Hence, by the induction hypothesis,
\[
\begin{aligned}
|N_G(z)\cap (S\cup\{x\})_{t-1}^G|
&\ge |N_H(z)\cap S_{t-1}^H|\\
&\ge \left\lceil \rho d_H(z)\right\rceil\\
&= \left\lceil \rho d_G(z)\right\rceil.
\end{aligned}
\]
Thus
\[
z\in (S\cup\{x\})_t^G.
\]

Finally, suppose that $z=y$. Since
\[
d_G(y)=d_H(y)+1,
\]
we have
\[
\left\lceil \rho d_G(y)\right\rceil
=
\left\lceil \rho(d_H(y)+1)\right\rceil
\le
\left\lceil \rho d_H(y)\right\rceil+1.
\]
Moreover, $x\in (S\cup\{x\})_{t-1}^G$ and $xy\in E(G)$. Since
$x\notin N_H(y)$, the induction hypothesis yields
\[
\begin{aligned}
|N_G(y)\cap (S\cup\{x\})_{t-1}^G|
&\ge |N_H(y)\cap S_{t-1}^H|+1\\
&\ge \left\lceil \rho d_H(y)\right\rceil+1\\
&\ge \left\lceil \rho d_G(y)\right\rceil.
\end{aligned}
\]
Therefore,
\[
y\in (S\cup\{x\})_t^G.
\]

Thus
\[
S_t^H\subseteq (S\cup\{x\})_t^G
\]
for every $t\ge 0$. The second inclusion follows by symmetry.

For (2), we again proceed by induction on $t$. If $t=0$, then
\[
C_0^G=C\subseteq C\cup\{y\}=(C\cup\{y\})_0^H.
\]

Assume that
\[
C_{t-1}^G\subseteq (C\cup\{y\})_{t-1}^H.
\]
Let $z\in C_t^G$. If $z\in C_{t-1}^G$, then
\[
z\in C_{t-1}^G
\subseteq (C\cup\{y\})_{t-1}^H
\subseteq (C\cup\{y\})_t^H.
\]
Thus, suppose that
\[
z\in C_t^G\setminus C_{t-1}^G.
\]

If $z=y$, then
\[
y\in C\cup\{y\}\subseteq (C\cup\{y\})_t^H.
\]

If $z\notin\{x,y\}$, then $N_G(z)=N_H(z)$ and
$d_G(z)=d_H(z)$. Hence, by the induction hypothesis,
\[
\begin{aligned}
|N_H(z)\cap (C\cup\{y\})_{t-1}^H|
&\ge |N_G(z)\cap C_{t-1}^G|\\
&\ge \lceil\rho d_G(z)\rceil\\
&=\lceil\rho d_H(z)\rceil.
\end{aligned}
\]
Thus, $z\in(C\cup\{y\})_t^H$.

Finally, suppose that $z=x$. Since
\[
x\in C_t^G\setminus C_{t-1}^G,
\]
$x$ becomes active at step $t$. By our assumption that $x$ becomes active
no later than $y$, we have
\[
y\notin C_{t-1}^G.
\]
Hence,
\[
y\notin N_G(x)\cap C_{t-1}^G.
\]
Since $y\notin C_{t-1}^G$ and
$N_G(x)=N_H(x)\cup\{y\}$, we have
\[
N_G(x)\cap C_{t-1}^G
=
N_H(x)\cap C_{t-1}^G
\subseteq
N_H(x)\cap(C\cup\{y\})_{t-1}^H,
\]
where the inclusion follows from the induction hypothesis.
Thus,
\[
\begin{aligned}
|N_H(x)\cap (C\cup\{y\})_{t-1}^H|
&\ge |N_G(x)\cap C_{t-1}^G|\\
&\ge \lceil \rho d_G(x)\rceil\\
&=\lceil \rho(d_H(x)+1)\rceil\\
&\ge \lceil \rho d_H(x)\rceil.
\end{aligned}
\]
Hence,
\[
x\in (C\cup\{y\})_t^H.
\]

Therefore,
\[
C_t^G\subseteq (C\cup\{y\})_t^H
\]
for every $t\ge 0$.
\end{proof}

\begin{theorem}\label{thm:added-edge}
Let $\rho\in[0,1]$ and let $G=H+xy$. Then
\[
|h_\rho(G)-h_\rho(H)|\le 1.
\]
\end{theorem}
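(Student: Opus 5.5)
The plan is to prove the two inequalities $h_\rho(G)\le h_\rho(H)+1$ and $h_\rho(H)\le h_\rho(G)+1$ separately, each by applying Lemma~\ref{lem:process-comparison} to a minimum contagious set. Since $V(G)=V(H)$, any subset of vertices serves as an initial set in both graphs. The case $\rho=0$ is trivial because $h_0\equiv 0$, though the argument below does not even need to exclude it.

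For the first inequality, I would let $S$ be a minimum $\rho$-contagious set of $H$, so $|S|=h_\rho(H)$ and $S_t^H=V(H)$ for some $t$. Part~(1) of Lemma~\ref{lem:process-comparison} gives
\[
V(G)=V(H)=S_t^H\subseteq (S\cup\{x\})_t^G .
\]
Hence $S\cup\{x\}$ is $\rho$-contagious in $G$, and so
\[
h_\rho(G)\le |S|+1=h_\rho(H)+1 .
\]

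For the reverse inequality, I would let $C$ be a minimum $\rho$-contagious set of $G$. Every vertex eventually becomes active in $G$, so both $x$ and $y$ have well-defined activation times, and one of them is activated no later than the other. By symmetry, relabeling the endpoints of the added edge if necessary, I assume $x$ is activated no later than $y$. Part~(2) of Lemma~\ref{lem:process-comparison} then gives
\[
V(H)=C_t^G\subseteq (C\cup\{y\})_t^H
\]
for $t$ large. So $C\cup\{y\}$ is contagious in $H$, and
\[
h_\rho(H)\le h_\rho(G)+1 .
\]
Combining the two inequalities yields the theorem.

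The only point needing care is the symmetry step in the second inequality. Both parts of Lemma~\ref{lem:process-comparison} are stated for the pair $(x,y)$, but the hypothesis of part~(2) is just a labeling choice, since $H+xy=H+yx$. So we may always name the earlier-activated endpoint $x$. I expect no real obstacle beyond this: the substantive work was already done in Lemma~\ref{lem:process-comparison}.
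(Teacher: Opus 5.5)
Your proposal is correct and follows the paper's proof essentially verbatim: part (1) of Lemma~\ref{lem:process-comparison} applied to a minimum contagious set $S$ of $H$ shows that $S\cup\{x\}$ is contagious in $G$, and part (2) applied to a minimum contagious set $C$ of $G$ shows that $C\cup\{y\}$ is contagious in $H$. Your relabeling remark, that $H+xy=H+yx$ lets the earlier-activated endpoint be called $x$, is exactly the paper's ``without loss of generality.''
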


\begin{proof}
Let $S$ be a minimum $\rho$-contagious set of $H$.
Then $S_t^H=V(H)$ for some $t$. By
Lemma~\ref{lem:process-comparison} (1),
\[
S_t^H\subseteq (S\cup\{x\})_t^G.
\]
Hence $S\cup\{x\}$ is a $\rho$-contagious set of $G$, and so
\[
h_\rho(G)\le h_\rho(H)+1.
\]

Conversely, let $C$ be a minimum $\rho$-contagious set of $G$.
Without loss of generality, assume that $x$ becomes active no
later than $y$. Since $C_t^G=V(G)$ for some $t$, by
Lemma~\ref{lem:process-comparison} (2),
\[
C_t^G\subseteq (C\cup\{y\})_t^H.
\]
Thus $C\cup\{y\}$ is a $\rho$-contagious set of $H$, and hence
\[
h_\rho(H)\le h_\rho(G)+1.
\]
Therefore,
\[
|h_\rho(G)-h_\rho(H)|\le1.
\]
\end{proof}

\begin{lemma}\label{lem:edge-preserves-resistance}
Let $\rho\in[0,1]$, let $H$ be a graph, let $F$ be a $\rho$-resistant set in $H$, and let
$x,y$ be nonadjacent vertices of $H$. If
\[
\{x,y\}\subseteq F
\qquad\text{or}\qquad
\{x,y\}\cap F=\emptyset,
\]
then $F$ is a $\rho$-resistant set in $G=H+xy$.
\end{lemma}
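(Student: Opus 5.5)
The plan is a direct verification from the definition of a resistant set, with one case for each of the two hypotheses. We must show that for every $v\in F$,
\[
|N_G(v)\setminus F|<\lceil \rho d_G(v)\rceil .
\]
For any $v\notin\{x,y\}$ we have $N_G(v)=N_H(v)$ and $d_G(v)=d_H(v)$. So for such $v\in F$ the required inequality is exactly the resistance inequality in $H$, and nothing needs to be done. It therefore remains to treat $v\in\{x,y\}\cap F$, which is nonempty only in the first case.

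\emph{Case $\{x,y\}\cap F=\emptyset$.} Here no vertex of $F$ changes its neighborhood or degree, so $F$ is resistant in $G$ immediately. Since $F\subseteq V(H)=V(G)$ is nonempty, the nonemptiness requirement in the definition is unaffected.

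\emph{Case $\{x,y\}\subseteq F$.} By symmetry, consider $v=x$. We have $N_G(x)=N_H(x)\cup\{y\}$, and since $y\in F$,
\[
N_G(x)\setminus F=N_H(x)\setminus F .
\]
Resistance of $F$ in $H$ then gives
\[
|N_G(x)\setminus F|=|N_H(x)\setminus F|<\lceil\rho d_H(x)\rceil .
\]
Because $d_G(x)=d_H(x)+1$ and $\rho\ge0$, the threshold can only increase:
\[
\lceil\rho d_H(x)\rceil\le\lceil\rho d_G(x)\rceil .
\]
Combining the two displays gives the inequality for $x$. The same argument with the roles of $x$ and $y$ exchanged handles $y$.

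There is no real obstacle here. The only point needing care is the direction of the threshold monotonicity in the second case: adding an edge inside $F$ raises the threshold of $x$ and $y$ while leaving their number of neighbors outside $F$ unchanged, so resistance is preserved.
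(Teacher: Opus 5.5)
Your proof is correct and follows essentially the same route as the paper. The added edge never has exactly one endpoint in $F$, so $|N_G(v)\setminus F|=|N_H(v)\setminus F|$ for every $v\in F$, while $\lceil\rho d_H(v)\rceil\le\lceil\rho d_G(v)\rceil$; the paper simply states this uniformly for all $v\in F$ instead of splitting into your two cases.
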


\begin{proof}
Under either assumption, the added edge does not have exactly one
endpoint in $F$. Consequently, for every $v\in F$,
\[
\abs{N_G(v)\setminus F}
=
\abs{N_H(v)\setminus F}.
\]
Since $F$ is a resistant set in $H$,
\[
\abs{N_G(v)\setminus F}
=
\abs{N_H(v)\setminus F}
<
\ceil{\rho d_H(v)}
\leq
\ceil{\rho d_G(v)}.
\]
Therefore, $F$ is a $\rho$-resistant set in $G$.
\end{proof}

\begin{lemma}\label{lem:leaf-edge-preserves-resistance}
Let $\rho\in(1/2,1]$, let $F$ be a $\rho$-resistant set in $H$, and
let $x\in F$ be a leaf of $H$ whose neighbor belongs to $F$.
If $y\in V(H)\setminus F$, then $F$ remains a
$\rho$-resistant set in $G=H+xy$.
\end{lemma}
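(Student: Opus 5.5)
We verify the resistance inequality $\abs{N_G(v)\setminus F}<\ceil{\rho d_G(v)}$ vertex by vertex for $v\in F$. The added edge $xy$ has exactly one endpoint in $F$, namely $x$, because $y\notin F$. So the only vertex of $F$ whose neighborhood outside $F$ or whose degree changes is $x$. For every $v\in F\setminus\{x\}$ we have $N_G(v)=N_H(v)$ and $d_G(v)=d_H(v)$. For such $v$ the inequality is inherited verbatim from the resistance of $F$ in $H$, exactly as in the proof of Lemma~\ref{lem:edge-preserves-resistance}.

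It remains to check $x$ itself. In $H$, $x$ is a leaf whose unique neighbor $w$ lies in $F$, so $N_H(x)\setminus F=\emptyset$. In $G$ we get $N_G(x)=\{w,y\}$, with $w\in F$ and $y\notin F$. Hence
\[
\abs{N_G(x)\setminus F}=1 \quad\text{and}\quad d_G(x)=2.
\]
Since $\rho>1/2$, we have $2\rho>1$, so $\ceil{\rho d_G(x)}=\ceil{2\rho}=2>1$. The resistance condition therefore holds at $x$. Together with the first paragraph this shows that $F$ is $\rho$-resistant in $G$. It is nonempty since it is unchanged as a set.

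There is no real obstacle in this argument. The only point requiring care is that the hypothesis $\rho>1/2$ is used exactly once, to make the threshold of the new degree-two vertex equal to $2$. For $\rho\le 1/2$ that threshold is $1$, and the claim fails.
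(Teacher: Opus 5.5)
Your proof is correct and is essentially the paper's argument: since $y\notin F$, vertices of $F\setminus\{x\}$ keep their neighborhoods and degrees, and at $x$ one has $\abs{N_G(x)\setminus F}=1<2=\ceil{2\rho}=\ceil{\rho d_G(x)}$. One small correction to your closing aside: the claim fails for $0<\rho\le 1/2$, but at $\rho=0$ the threshold is $0$, so no resistant sets exist and the statement would hold vacuously.
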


\begin{proof}
The degrees and outside-neighbor counts of the vertices in
$F\setminus\{x\}$ are unchanged. At $x$, we have $ d_G(x)=2$ and 
$
\abs{N_G(x)\setminus F}=1$.
Since $\rho>1/2$,
\[
\abs{N_G(x)\setminus F}
=
1
<
\ceil{2\rho}
=
\ceil{\rho d_G(x)}.
\]
Therefore, $F$ is a $\rho$-resistant set in $G$.
\end{proof}
\begin{theorem}\label{thm:one-added-edge-tree}
Let $T$ be a tree of order $n\geq3$, and let $\rho\in[0,1]$. There exist two nonadjacent vertices $x,y\in V(T)$ such that for $G= T+xy$ we have 
\[
0\leq h_{\rho}(G)-h_\rho(T)\leq1.
\]
\end{theorem}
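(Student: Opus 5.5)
The plan is to prove the two inequalities separately. The upper inequality $h_\rho(G)-h_\rho(T)\le 1$ holds for any added edge by Theorem~\ref{thm:added-edge}. So the work is to choose nonadjacent $x,y$ with $h_\rho(G)\ge h_\rho(T)$. If $\rho=0$, every threshold is $0$ and both sides are $0$, so any nonadjacent pair works; such a pair exists since $n\ge 3$ and $T\neq K_n$. Assume from now on that $\rho>0$.

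For the lower bound, let $m=h_\rho(T)$. By Corollary~\ref{cor:min-max}, fix pairwise disjoint connected $\rho$-resistant sets $F_1,\dots,F_m$ of $T$, and let $R=V(T)\setminus\bigcup_i F_i$. Lemma~\ref{resistant graph prop} holds in any graph, not only in trees. Hence, if all $F_i$ remain resistant in $G$, every $\rho$-contagious set of $G$ meets each of the $m$ disjoint sets $F_i$, so $h_\rho(G)\ge m$. It therefore suffices to choose $x,y$ so that each $F_i$ stays resistant. We use Lemma~\ref{lem:edge-preserves-resistance} for sets containing both or neither of $x,y$, and Lemma~\ref{lem:leaf-edge-preserves-resistance} for sets containing exactly one of them.

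The case analysis runs as follows.

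\emph{(a) Some $F_i$ has at least $3$ vertices.} Then $T[F_i]$ is a tree on at least $3$ vertices, so it is not complete. Take $x,y\in F_i$ nonadjacent. Every $F_j$ then contains both or neither of $x,y$.

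\emph{(b) $R$ contains two nonadjacent vertices.} Take $x,y\in R$; again every $F_j$ contains neither.

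Otherwise, every $F_i$ has at most $2$ vertices and $R$ induces a clique in a tree, so $|R|\le 2$.
\begin{itemize}
\item If $m=1$ and $F_1=V(T)$, we are in (a), since $n\ge 3$.
\item If $m=1$ and $F_1$ is proper, or if $m\ge 2$, then each $F_i$ is proper. Lemma~\ref{lem:lower-bounds-of-resistance} then rules out $\rho\le\frac12$, since that would force $|F_i|\ge\floor{1/\rho}+1\ge 3$. It also gives $|F_i|\ge 2$.
\end{itemize}
So in the remaining case $\rho>\frac12$ and each $F_i$ is an edge of $T$.

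\emph{(c) Remaining case.} Take two leaves $x,y$ of $T$. They are nonadjacent because $T\ne K_2$.
\begin{itemize}
\item If both lie in $R$, they are nonadjacent vertices of $R$, which case (b) has excluded.
\item If a leaf lies in some $F_i$, then $F_i$ consists of that leaf and its unique neighbor, since $F_i$ is connected. The two leaves cannot form the same $F_i$, since otherwise $T=K_2$.
\end{itemize}
So each of $x,y$ lies either in $R$ or in its own pair together with its neighbor. For a pair $F_i$ containing exactly one of them, say $x$, Lemma~\ref{lem:leaf-edge-preserves-resistance} applies, since $\rho>\frac12$ and $y\notin F_i$. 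All other $F_j$ contain neither endpoint and are handled by Lemma~\ref{lem:edge-preserves-resistance}. If both $x$ and $y$ lie in pairs, we apply Lemma~\ref{lem:leaf-edge-preserves-resistance} to each pair separately. This is legitimate because the lemma only inspects the degree and outside-neighbour count at the endpoint lying in the set, and the other endpoint lies outside that set.

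The main point needing care is this final case. Lemma~\ref{lem:leaf-edge-preserves-resistance} is stated for a single resistant set, and here both endpoints may change resistant sets at once. I would verify directly that at $x$ we get $d_G(x)=2$ and $|N_G(x)\setminus F_i|=1<\ceil{2\rho}$, with the same computation at $y$, while all other degrees are unchanged.
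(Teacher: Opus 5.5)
Your proof is correct and follows the paper's argument. The upper bound comes from Theorem~\ref{thm:added-edge}; for the lower bound you keep the $m$ disjoint resistant sets by joining two nonadjacent vertices of some $F_i$ with $|F_i|\ge 3$, or, when every $F_i$ is an edge (forcing $\rho>\frac12$), by joining two leaves and applying Lemmas~\ref{lem:edge-preserves-resistance} and~\ref{lem:leaf-edge-preserves-resistance}. The differences are cosmetic: the paper handles $m=1$ directly (any added edge works because all thresholds in $G$ are positive), your case (b) is redundant, and your worry about both endpoints lying in pairs is unnecessary, since Lemma~\ref{lem:leaf-edge-preserves-resistance} is applied to each $F_j$ separately with the other endpoint outside it.
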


\begin{proof}
If $\rho=0$ then every vertex has threshold zero, so $h_0(T)=h_0(G)=0$, and the result follows. Thus, let $\rho \in (0,1]$ and set $m= h_{\rho}(T)$.

By Corollary~\ref{cor:min-max}, there exist $m$ pairwise disjoint
connected resistant sets
\[
F_1,\dots,F_m\]
in $T$. If $m=1$, choose any two nonadjacent vertices $x,y\in V(T)$ and let $G=T+xy$. Since all thresholds in $G$ are positive,
\[
h_{\rho}(G)\geq 1=m.
\]

Suppose now that $m\geq2$. Since the sets are pairwise disjoint, every
$F_i$ is proper. Thus, by Lemma~\ref{lem:lower-bounds-of-resistance}, we have
$\abs{F_i}\geq2$ for every $i$. We consider the following cases:

\noindent\emph{Case 1:}
$\abs{F_i}\geq3$
for some $i$. 

Since $T[F_i]$ is a tree of order at least three, we can choose two nonadjacent vertices $x,y \in F_i$. Let $G= T+xy$. The added edge has both endpoints in $F_{i}$ and neither endpoint in any $F_j$ with $j\neq i$. Thus, by Lemma~\ref{lem:edge-preserves-resistance}, each of $F_1,\dots,F_m$ remains a resistant set in $G$. We then have by Lemma~\ref{resistant graph prop}, 
\[
h_{\rho}(G) \geq m.
\]

\noindent\emph{Case 2:}
$\abs{F_i}=2$
for every $i$. 

By Lemma~\ref{lem:lower-bounds-of-resistance}, this case can occur only when $\rho>1/2$. 

Choose two nonadjacent leaves $x,y \in V(T)$ and let $G= T+xy$. Fix $j\in \{1,\dots, m\}$. Since $T[F_j]$ is connected and has order two, its vertices are adjacent. Thus, $F_j$ cannot contain both $x$ and $y$.

If $F_j \cap \{x,y\}=\emptyset$, then Lemma~\ref{lem:edge-preserves-resistance} implies that $F_j$ remains a resistant set in $G$.

Otherwise, $F_j$ contains exactly one of $x$ and $y$. By symmetry, assume that $x\in F_j$. Since $x$ is a leaf and $T[F_j]$ is connected, we have $F_j =\{x,u\}$, where $u$ is the neighbor of $x$ in $T$. Lemma~\ref{lem:leaf-edge-preserves-resistance} then implies that $F_j$ remains a resistant set in $G$. 

Thus, all $F_1,\ldots,F_m$ remain $\rho$-resistant sets
in $G$, and hence $h_\rho(G)\ge m$ by
Lemma~\ref{resistant graph prop}.

We have shown that an edge can always be added to $T$ to obtain a graph $G$ satisfying 
\[
h_{\rho}(G)\geq h_{\rho}(T).
\]

Combining this inequality with the bound in Theorem~\ref{thm:added-edge}, we obtain 
\[
0\leq h_{\rho}(G)- h_{\rho}(T)\leq 1.
\]
\end{proof}

\medskip

\section{Spectrum of $h_{\rho}$ on trees of order $n$}

We now turn from determining the maximum value $M_\rho(n)$ to identifying all values of $h_\rho$ attained by trees of order $n$. We first examine the effect of reattaching a leaf to another vertex.

\begin{theorem}\label{thm:gap}
Let $\rho\in[0,1]$, and let $T$ be an $n$-vertex tree with $n\ge3$.
Let $v$ be a leaf of $T$ with unique neighbor $u$.  For a vertex
$w\in V(T)\setminus\{u,v\}$, let $T' = T-uv+wv$.  
If $S$ is a minimum $\rho$-contagious set of $T$ containing no leaves, then $S' = S \cup \{w\}$ is a $\rho$-contagious set of $T'$.  
\end{theorem}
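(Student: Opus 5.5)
We will argue through resistant sets, using Lemma~\ref{resistant graph prop}: $S'=S\cup\{w\}$ is $\rho$-contagious in $T'$ if and only if it meets every $\rho$-resistant set of $T'$. So we suppose that $F$ is a $\rho$-resistant set of $T'$ with $F\cap S'=\emptyset$. From $F$ we will build a $\rho$-resistant set of $T$ that is disjoint from $S$, which contradicts the same lemma applied to $T$.

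The case $\rho=0$ is trivial. Every threshold is $0$, so the condition $\abs{N(v)\setminus F}<0$ can never hold and no resistant sets exist. Hence we assume $\rho>0$.

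The edge change only affects the vertices $u$, $v$ and $w$. In passing from $T$ to $T'$:
\begin{itemize}
\item $d(u)$ decreases by one;
\item $d(w)$ increases by one;
\item $v$ remains a leaf, now with neighbor $w$;
\item every other vertex has the same neighborhood and degree in both trees.
\end{itemize}
We first note that $w\notin F$, since $w\in S'$. We next show $v\notin F$. If $v\in F$, then in $T'$ its only neighbor is $w\notin F$, so $\abs{N_{T'}(v)\setminus F}=1\not<1=\ceil{\rho}$. This contradicts resistance. We now split according to whether $u\in F$.

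\emph{Case $u\notin F$.} Here $F$ avoids $u$, $v$ and $w$. Every vertex of $F$ then has the same neighborhood and the same threshold in $T$ as in $T'$. Hence $F$ is $\rho$-resistant in $T$. It is also disjoint from $S\subseteq S'$, a contradiction.

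\emph{Case $u\in F$.} Here $F$ itself may fail to be resistant in $T$. In $T$ the vertex $u$ gains the outside neighbor $v$, and its threshold can rise by at most one, which need not compensate. So we set $F^*=F\cup\{v\}$ and check that $F^*$ is resistant in $T$.
\begin{itemize}
\item For $v$: its only neighbor in $T$ is $u\in F^*$, so $\abs{N_T(v)\setminus F^*}=0<1=\ceil{\rho}$.
\item For $u$: we have $N_T(u)\setminus F^*=N_{T'}(u)\setminus F$. Since $d_T(u)=d_{T'}(u)+1$, we get $\ceil{\rho d_T(u)}\ge\ceil{\rho d_{T'}(u)}$. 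So the strict inequality $\abs{N_{T'}(u)\setminus F}<\ceil{\rho d_{T'}(u)}$ carries over to $T$.
\item For every other vertex of $F$: it is distinct from $w$ (as $w\notin F$), and its neighborhood and threshold are unchanged. The only neighborhood that gains $v$ is that of $u$, which is already handled.
\end{itemize}
Finally, $F^*$ is disjoint from $S$. Indeed, $F\cap S=\emptyset$, and $v\notin S$ because $v$ is a leaf of $T$ and $S$ contains no leaves. This is exactly where the hypothesis that $S$ has no leaves is used. Again we obtain a contradiction with Lemma~\ref{resistant graph prop}.

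The main point needing care is the case $u\in F$. The threshold of $u$ moves in the wrong direction for a naive transfer of $F$, and the fix is to absorb $v$ into the resistant set. Minimality of $S$ is not actually needed; only its contagiousness and the absence of leaves are used.
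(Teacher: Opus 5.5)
Your proof is correct, but it takes a different route from the paper's.

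\textbf{The paper's route.} The paper works directly with the dynamics. It proves by induction on $t$ that $S_t^T\subseteq (S')_t^{T'}$ for every $t\ge 0$, checking the cases $x=w$, $x=u$, $x=v$ and $x\notin\{u,v,w\}$. The leaf-free hypothesis enters in the case $x=u$. There, $v\notin S$ and the fact that $v$ cannot be activated before $u$ give $N_T(u)\cap S_t^T=N_{T'}(u)\cap S_t^T$. Since $\lceil \rho d_{T'}(u)\rceil\le\lceil \rho d_T(u)\rceil$, the active neighbors of $u$ in $T'$ then suffice.

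\textbf{Your route.} You argue statically and dually through Lemma~\ref{resistant graph prop}. You convert a resistant set $F$ of $T'$ that avoids $S'$ into a resistant set of $T$ that avoids $S$. Absorbing $v$ into $F^*=F\cup\{v\}$ when $u\in F$ is the exact counterpart of the paper's observation that $v$ is still inactive when $u$ activates.

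\textbf{What each buys.} The paper's induction gives slightly more: at every step, activation in $T'$ from $S'$ is at least as far along as activation in $T$ from $S$. Hence full activation takes no more rounds. Your argument is shorter and avoids step-by-step bookkeeping. It also isolates the no-leaf hypothesis in the single line $v\notin S$ and fits naturally with the resistant-set framework of Section~4. Your remark that minimality of $S$ is never used applies equally to the paper's proof.
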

\begin{proof}
If $\rho=0$, the result is immediate. Thus, suppose that $\rho>0$.
We prove by induction on $t$ that
\[
S_t^T\subseteq (S')_t^{T'}
\]
for every $t\ge0$.

For $t=0$, this is immediate since
\[
S_0^T=S\subseteq S'=(S')_0^{T'}.
\]

Suppose that
\[
S_t^T\subseteq (S')_t^{T'}.
\]
We show that
\[
S_{t+1}^T\subseteq (S')_{t+1}^{T'}.
\]
Let $x\in S_{t+1}^T$. If $x\in S_t^T$, then by the induction
hypothesis,
\[
x\in (S')_t^{T'}\subseteq (S')_{t+1}^{T'}.
\]
Thus, suppose that
\[
x\in S_{t+1}^T\setminus S_t^T.
\]
We consider the following cases.

\medskip
\noindent
(1) $x=w$. Since $w\in S'$, we have
\[
w\in (S')_0^{T'}\subseteq (S')_{t+1}^{T'}.
\]

\medskip
\noindent
(2) $x=u$. Since $S$ contains no leaves, $v\notin S$.
Moreover, since $v$ is a leaf of $T$ with unique neighbor $u$,
$v$ cannot be activated before $u$. Since $u\notin S_t^T$, it follows
that $v\notin S_t^T.$
Hence,
\[
N_T(u)\cap S_t^T=N_{T'}(u)\cap S_t^T.
\]
Using the induction hypothesis, we obtain
\[
\begin{aligned}
|N_{T'}(u)\cap (S')_t^{T'}|
&\ge |N_{T'}(u)\cap S_t^T|\\
&=|N_T(u)\cap S_t^T|\\
&\ge \lceil \rho d_T(u)\rceil\\
&\ge \lceil \rho d_{T'}(u)\rceil.
\end{aligned}
\]
Thus,
\[
u\in (S')_{t+1}^{T'}.
\]

\medskip
\noindent
(3) $x=v$. Since $w\in S'$ and $v$ is adjacent to $w$ in $T'$,
\[
v\in (S')_1^{T'}\subseteq (S')_{t+1}^{T'}.
\]

\medskip
\noindent
(4) $x\notin\{u,v,w\}$. Since
\[
N_T(x)=N_{T'}(x)
\quad\text{and}\quad
d_T(x)=d_{T'}(x),
\]
the induction hypothesis gives
\[
\begin{aligned}
|N_{T'}(x)\cap (S')_t^{T'}|
&\ge |N_T(x)\cap S_t^T|\\
&\ge \lceil \rho d_T(x)\rceil\\
&=\lceil \rho d_{T'}(x)\rceil.
\end{aligned}
\]
Therefore,
\[
S_{t+1}^T\subseteq (S')_{t+1}^{T'}.
\]
By induction,
\[
S_t^T\subseteq (S')_t^{T'}
\]
for every $t\ge0$. Since $S$ is a $\rho$-contagious set of $T$,
we have $S_t^T=V(T)$ for some $t$. Hence,
\[
(S')_t^{T'}=V(T'),
\]
and therefore $S'$ is a $\rho$-contagious set of $T'$.
\end{proof}

\begin{definition}
Let $T$ be a tree, let $v$ be a leaf of $T$ with unique neighbor $u$,
and let $w\in V(T)\setminus\{u,v\}$. The operation
\[
T\longmapsto T-uv+vw
\]
is called a \emph{leaf exchange} at $v$.
\end{definition}
We may identify the vertex sets of $T$ and $T'$ with $V(K_n)$.
Then both $T$ and $T'$ can be viewed as spanning trees of $K_n$. Since $K_n$ is $2$-connected, its leaf-exchange spanning tree graph
is connected by \cite[Theorem~1]{leaf1996}. Hence, we obtain the
following corollary.

\begin{corollary}\label{cor:exchange}
Let $T$ and $T'$ be two $n$-vertex trees on the same vertex set,
where $n\ge3$. Then there exists a sequence
\[
T=T_0,T_1,\ldots,T_k=T'
\]
of $n$-vertex trees such that for each $i=0,\ldots,k-1$, $T_{i+1}$ is obtained from
$T_i$ by a leaf exchange.
\end{corollary}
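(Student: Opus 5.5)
The corollary is a direct specialization of the leaf-exchange connectivity result \cite[Theorem~1]{leaf1996}. The plan is to identify $T$ and $T'$ with spanning trees of a fixed complete graph and then read off the required sequence.

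First, since $T$ and $T'$ share the vertex set $V$ with $\abs{V}=n$, I identify $V$ with $V(K_n)$. Every edge of $T$ or $T'$ joins two vertices of $V$, so both are spanning trees of $K_n$.

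Next I check the hypothesis of \cite[Theorem~1]{leaf1996}. For $n\ge3$, the graph $K_n$ is $2$-connected, since deleting any single vertex leaves $K_{n-1}$, which is connected and nontrivial. This is where $n\ge3$ is used; for $n=2$ the graph $K_2$ is not $2$-connected.

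Then I spell out the leaf-exchange spanning tree graph of $K_n$:
\begin{itemize}
\item its vertices are the spanning trees of $K_n$;
\item two trees are adjacent when one is obtained from the other by removing a leaf edge $uv$, with $v$ a leaf, and adding an edge $vw$ of $K_n$ with $w\notin\{u,v\}$.
\end{itemize}
In $K_n$ every such edge $vw$ is available, so this adjacency is exactly the leaf exchange $T\mapsto T-uv+vw$ of the preceding definition. I also note that the result is again a spanning tree: $v$ is reattached to the tree $T-v$ by a single edge, so it is connected with $n-1$ edges.

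By \cite[Theorem~1]{leaf1996}, this graph is connected. Hence there is a path $T=T_0,T_1,\ldots,T_k=T'$ in it, and each $T_{i+1}$ is obtained from $T_i$ by a leaf exchange. Each $T_i$ is a spanning tree of $K_n$, hence an $n$-vertex tree on $V$, which is the claim.

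The only step needing care is confirming that the cited theorem's notion of adjacency matches the paper's leaf exchange. Two points need checking: the exchange must move a leaf's pendant edge to another vertex, and it must keep both the leaf and the vertex set fixed. If the cited formulation is phrased slightly differently, say with the roles of $v$ and $u$ described via edge swaps, I would verify that each such swap in $K_n$ is a leaf exchange in the paper's sense.
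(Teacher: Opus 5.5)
Your proposal is correct and follows the paper's argument: view $T$ and $T'$ as spanning trees of $K_n$, note that $K_n$ is $2$-connected for $n\ge3$, and invoke \cite[Theorem~1]{leaf1996} on the connectivity of the leaf-exchange spanning tree graph. Your extra check that the cited adjacency matches the paper's leaf exchange is care the paper leaves implicit, and it goes through: when $n\ge3$, removing a pendant edge isolates its leaf, so the only edge that can restore a spanning tree is one incident to that leaf.
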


\begin{corollary}\label{cor:leaf-exchange}
Let $T$ and $T'$ be trees of order $n\ge3$,
where $T'$ is obtained from $T$ by a leaf exchange.
Then
\[
|h_\rho(T')-h_\rho(T)|\le1.
\]
\end{corollary}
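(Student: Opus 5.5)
The plan is to combine Theorem~\ref{thm:gap} with Proposition~\ref{prop:leafless}, applying the argument in both directions. A leaf exchange is reversible, so the two inequalities $h_\rho(T')\le h_\rho(T)+1$ and $h_\rho(T)\le h_\rho(T')+1$ should follow from the same argument with the roles of $T$ and $T'$ swapped.

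First I dispose of $\rho=0$, where $h_0\equiv 0$. Now let $\rho\in(0,1]$ and $n\ge3$. Then no tree of order $n$ is isomorphic to $K_2$, so it has no component isomorphic to $K_2$. Proposition~\ref{prop:leafless} therefore provides a minimum $\rho$-contagious set $S$ of $T$ that contains no leaves.

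Write $T'=T-uv+wv$, where $v$ is a leaf of $T$ with neighbor $u$ and $w\in V(T)\setminus\{u,v\}$. By Theorem~\ref{thm:gap}, $S\cup\{w\}$ is $\rho$-contagious in $T'$. Hence $h_\rho(T')\le |S|+1=h_\rho(T)+1$.

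For the reverse inequality I check that $T$ is obtained from $T'$ by a leaf exchange. In $T'$ the vertex $v$ is a leaf, since its only neighbor is $w$. The vertex $u$ lies in $V(T')\setminus\{w,v\}$, and
\[
T'-wv+uv=T.
\]
So $T$ is the leaf exchange of $T'$ at $v$ with new neighbor $u$. Next, $T'$ is a tree of order $n\ge 3$, so Proposition~\ref{prop:leafless} again gives a minimum leafless $\rho$-contagious set $S^*$ of $T'$. Applying Theorem~\ref{thm:gap} to $T'$, with leaf $v$, old neighbor $w$ and new neighbor $u$, shows that $S^*\cup\{u\}$ is $\rho$-contagious in $T$. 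Thus $h_\rho(T)\le h_\rho(T')+1$, and combining the two inequalities gives the claim.

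No step here should be a serious obstacle. The only point needing care is the hypothesis check: Theorem~\ref{thm:gap} requires $n\ge 3$ and a minimum contagious set with no leaves. The second requirement comes from Proposition~\ref{prop:leafless}, which applies because a tree on at least three vertices is not $K_2$. One must also confirm that the reversed operation is a genuine leaf exchange, meaning $u\ne w$ and $u\ne v$; both hold by the choice of $w$.
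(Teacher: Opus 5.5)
Your proposal is correct and follows the paper's proof: you apply Theorem~\ref{thm:gap} in both directions, using that the leaf exchange is reversible. You also make explicit that Proposition~\ref{prop:leafless} supplies the leafless minimum contagious set required by Theorem~\ref{thm:gap}, a step the paper leaves implicit.
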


\begin{proof}
By Theorem~\ref{thm:gap},
\[
h_\rho(T')\le h_\rho(T)+1.
\]
Since $T$ can also be obtained from $T'$ by a
leaf exchange, applying the same theorem in the
reverse direction gives
\[
h_\rho(T)\le h_\rho(T')+1.
\]
The result follows.
\end{proof}

For $\rho\in(0,1]$ and $n\ge2$, define
\[
\mathcal H_\rho(n)
=
\{h_\rho(T): T \text{ is an $n$-vertex tree}\}.
\]

\begin{theorem}\label{thm:spectrum}
For every $\rho\in(0,1]$ and $n\ge2$,
\[
\mathcal H_\rho(n)
=
\{1,2,\ldots,M_\rho(n)\}.
\]
\end{theorem}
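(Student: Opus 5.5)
The plan is a discrete intermediate value argument along a leaf-exchange path, using Corollary~\ref{cor:exchange} together with Corollary~\ref{cor:leaf-exchange}.

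We begin with the easy inclusion $\mathcal H_\rho(n)\subseteq\{1,\ldots,M_\rho(n)\}$. Every tree $T$ of order $n\ge2$ is a nontrivial connected graph, so Observation~\ref{ob:lower} gives $h_\rho(T)\ge1$. The definition of $M_\rho(n)$ gives $h_\rho(T)\le M_\rho(n)$. The case $n=2$ is trivial: the only tree is $K_2$, with $h_\rho=1$, and $M_\rho(2)=1$ by Theorem~\ref{thm:main}, since $\floor{2/2}=1$ and $2<2(\floor{1/\rho}+1)$. So we assume $n\ge3$.

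For the reverse inclusion we need two endpoint trees on the common vertex set $V(K_n)$. The first is the star $K_{1,n-1}$, for which $h_\rho=1$, as noted after Figure~\ref{fig:hyper-graph-k_1,3}: the center lies in every resistant set. The second is a tree $T^*$ with $h_\rho(T^*)=M_\rho(n)$, which exists by definition of the maximum; concretely, it is one of the extremal trees of Section~\ref{extremal-trees}. Relabel $T^*$ so that both trees are spanning trees of $K_n$. Corollary~\ref{cor:exchange} then provides a sequence $K_{1,n-1}=T_0,T_1,\ldots,T_k=T^*$ of $n$-vertex trees, each obtained from the previous one by a leaf exchange. Corollary~\ref{cor:leaf-exchange} gives $|h_\rho(T_{i+1})-h_\rho(T_i)|\le1$ for each $i$.

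The integer sequence $h_\rho(T_0),\ldots,h_\rho(T_k)$ therefore starts at $1$, ends at $M_\rho(n)$, and changes by at most one at each step. Hence it takes every integer value in between. Given $j\in\{1,\ldots,M_\rho(n)\}$, let $i$ be the least index with $h_\rho(T_i)\ge j$; then $h_\rho(T_i)=j$, because $h_\rho(T_{i-1})\le j-1$ and the step size is at most one (and for $i=0$, $j=1$). Each $T_i$ has order $n$, so $j\in\mathcal H_\rho(n)$.

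The substantive ingredients are already in place. There is no real obstacle beyond checking that Corollary~\ref{cor:leaf-exchange} applies at every step, which requires $n\ge3$, and handling $n=2$ separately as above.
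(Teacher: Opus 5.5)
Your proposal is correct and follows the paper's proof. Both take a leaf-exchange path (Corollary~\ref{cor:exchange}) from the star to a tree attaining $M_\rho(n)$, bound each step by one via Corollary~\ref{cor:leaf-exchange}, and finish with a discrete intermediate value argument; you also make explicit the inclusion $\mathcal H_\rho(n)\subseteq\{1,\ldots,M_\rho(n)\}$ via Observation~\ref{ob:lower}, which the paper leaves implicit.
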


\begin{proof}
For $n = 2$, the statement is trivial. Assume that $n \ge 3$. 
Let $T_0$ be the star with $n$ vertices. Also, let $T'$ be a tree with $h_\rho(T') = M_\rho(n) = M$. By Corollary~\ref{cor:exchange} there exists a sequence \[
T_0,T_1,\ldots,T_k=T'
\]
of $n$-vertex trees such that for each $i=0,\ldots,k-1$, $T_{i+1}$ is obtained from
$T_i$ by a leaf exchange.

For each $i$, let
\[
h_i=h_\rho(T_i).
\]
Then
\[
h_0=1
\quad\text{and}\quad
h_k=M.
\]
Moreover, by Corollary~\ref{cor:leaf-exchange},
\[
|h_{i+1}-h_i|\le1
\]
for every $i=0,\ldots,k-1$.

Suppose that some integer in $\{1,2,\ldots,M\}$ does not occur among
$h_0,h_1,\ldots,h_k$, and let $q$ be such an integer. Since
$h_0=1$ and $h_k=M$, we have
\[
1<q<M.
\]
Let $j$ be the smallest index such that
\[
h_j>q.
\]
Then, since $q$ does not occur among $h_0,\ldots,h_k$,
\[
h_{j-1}<q<h_j.
\]
Since the $h_i$ are integers,
\[
|h_j-h_{j-1}|\ge2,
\]
contradicting
\[
|h_j-h_{j-1}|\le1.
\]
Therefore every integer in $\{1,2,\ldots,M\}$ occurs.
\end{proof}

\section*{Acknowledgements}
Jeong-Ok Choi was supported by the GIST Research Project
grant funded by the Gwangju Institute of Science and Technology (GIST)
in 2026. Collin McDevitt was supported by the Global Intern Program
at the Gwangju Institute of Science and Technology.

\section*{ Declaration of AI-assisted technologies}
During the preparation of this work, the authors used ChatGPT
(OpenAI, GPT-5.6 Sol) to assist with language editing, to provide
suggestions on the organization and flow of the manuscript, and to
check whether results obtained independently in this work had already
appeared in the literature in related forms, particularly where
differences in terminology or language might have made them difficult
to identify through conventional searches.   These tools were also used to assist with portions of selected proofs, particularly the proof of Proposition~\ref{prop:min-order}(2). After using this tool, the
authors reviewed and edited the content as needed and take full
responsibility for the content of the published article.

\section*{Data Availability}
No data was used for the research described in the article.

\bibliographystyle{abbrv}
\bibliography{bib_cir.bib}

\end{document}